\theoremstyle{plain}
\newtheorem{proposition}{Proposition}
\newtheorem{theorem}{Theorem}
\newtheorem{corollary}{Corollary}
\newtheorem{lemma}{Lemma}
\theoremstyle{definition}
\newtheorem{definition}{Definition}
\newtheorem{remark}{Remark}
\newtheorem{ex}{Example}
\newcommand{\abs}[1]{|{#1}|}
\newcommand\xqed[1]{%
  \leavevmode\unskip\penalty9999 \hbox{}\nobreak\hfill
  \quad\hbox{#1}}
\newcommand\qer{\xqed{$\fullmoon$}}
\begin{document}

\title[Topological entropy of Turing complete dynamics]{Topological entropy of Turing complete dynamics}

\author[R. Bruera]{Renzo Bruera}
\address{Renzo Bruera, Universitat Politècnica de Catalunya, Departament de Matem\`{a}tiques, Avinguda Diagonal, 647, 08028 Barcelona. \it{e-mail: renzo.bruera@upc.edu}}
\thanks{Renzo Bruera is supported by the Spanish grants FPU20/07006 and PID2021-123903NB-I00, funded by MCIN/AEI/10.13039/501100011033, and by the Catalan grant 2021-SGR-00087.}

\author[R. Cardona]{Robert Cardona}\address{Robert Cardona, Departament de Matem\`atiques i Inform\`atica, Universitat de Barcelona, Gran Via de Les Corts Catalanes 585, 08007 Barcelona, Spain. CRM Centre de Recerca Matem\`{a}tica, Campus de Bellaterra
Edifici C, 08193 Bellaterra, Barcelona, Spain. \it{e-mail: robert.cardona@ub.edu}}

\thanks{Robert Cardona acknowledges partial support from the AEI grant PID2023-147585NA-I00, the Departament de Recerca i Universitats de la Generalitat de Catalunya (2021 SGR 00697). Robert Cardona and Eva Miranda are partially supported by the Spanish State Agency through the Severo Ochoa and Mar\'{\i}a de Maeztu Program for Centers and Units
of Excellence in R\&D (project CEX2020-001084-M)}

\author[E. Miranda]{Eva Miranda}\address{Eva Miranda,
Laboratory of Geometry and Dynamical Systems $\&$ Institut de Matem\`atiques de la UPC-BarcelonaTech (IMTech),  Universitat Polit\`{e}cnica de Catalunya,  Avinguda del Doctor Mara\~{n}on 44-50, 08028, Barcelona.  CRM Centre de Recerca Matem\`{a}tica, Campus de Bellaterra
Edifici C, 08193 Bellaterra, Barcelona
 \it{e-mail: eva.miranda@upc.edu }
 }
\thanks{Eva Miranda is supported by the Catalan Institution for Research and Advanced Studies via an ICREA Academia Prize 2021 and  partially supported by the Spanish State Research agency grant PID2023-146936NB-I00 funded by MICIU/AEI/
10.13039/501100011033, by ERDF/EU. Eva Miranda and Daniel Peralta-Salas are financed under the AEI-DFG project AQUACELL: Celestial Mechanics, Hydrodynamics, and Turing Machines AEI-DFG: Himmelsmechanik, Hydrodynamik und Turing-Maschinen with codes PCI2024-155042-2 and PCI2024-155062-2}
\author[D. Peralta-Salas]{Daniel Peralta-Salas} \address{Daniel Peralta-Salas, Instituto de Ciencias Matem\'aticas, Consejo Superior de Investigaciones Cient\'ificas,
28049 Madrid, Spain. \it{e-mail: dperalta@icmat.es} }
\thanks{Corresponding author. Daniel Peralta-Salas is supported by the grants CEX2023-001347-S, RED2022-134301-T and PID2022-136795NB-I00 funded by MCIN/AEI/10.13039/501100011033. R.C., E.M. and D.P.-S. acknowledge partial support from the grant “Computational, dynamical and geometrical
complexity in fluid dynamics”, Ayudas Fundaci\'on BBVA a Proyectos de Investigaci\'on Cient\'ifica~2021.}

\author[V. Salo]{Ville Salo}\address{Ville Salo, Department of Mathematics and Statistics, University of Turku, 20014 Turun yliopisto, Finland}

\begin{abstract}
We explore the relationship between Turing completeness and topological entropy of dynamical systems. We first prove that a natural class of Turing machines that we call ``branching Turing machines'' (which includes most of the known examples of universal Turing machines) has positive topological entropy. {Motivated by the recent construction of Turing complete Euler flows,} we deduce that any Turing complete dynamics with a continuous encoding that simulates a universal branching machine is chaotic. On the other hand, we show that, unexpectedly, universal Turing machines with zero topological entropy (and even zero speed) can be constructed, {unveiling the independence of chaos and universality at the symbolic level.}
\end{abstract}

\maketitle
\section{Introduction}

Turing machines are one of the most popular models of computation and can be understood as dynamical systems on the space of bi-infinite sequences. In two breakthrough works~\cite{Mo1,Mo2}, Cris Moore introduced the idea of simulating a Turing machine using continuous dynamical systems in the context of classical mechanics. His rough idea was to embed both the space of sequences and the discrete dynamics of the Turing machine into the phase space of a vector field and its continuous dynamics, respectively, using suitable encoding functions. The well-known existence of Turing machines that are universal, i.e., that can simulate any other Turing machine, led him to introduce the definition of a Turing complete dynamical system.

A striking corollary of his ideas was the discovery of a new type of complexity in dynamics, stemming from the fact that certain decision problems, such as the halting problem, cannot be decided by a universal Turing machine.
This yields the conclusion that any Turing complete dynamical system {(see Definition \ref{def:TCDS})} exhibits undecidable behavior for the problem of determining whether the trajectories whose initial data belong to a certain computable set will enter a certain computable open set or not after some time. A priori this computational complexity is very different from the standard way one measures complexity in dynamics, specifically the positivity of the topological entropy, which leads to the usual chaotic behavior.

Since Moore's foundational work, many authors have revisited the problem of simulating Turing machines with dynamical systems in different contexts. This includes low dimensional dynamics~\cite{KCG}, polynomial vector fields~\cite{GCB}, closed-form analytic flows~\cite{KM,GZ}, potential well dynamics~\cite{T1} and fluid flows~\cite{CMPP2,CMP2,CMP3, CMPP3}. As explained in the previous paragraph, these constructions yield complex dynamical systems with undecidable orbits, which are able to simulate any computer algorithm. It is then natural to ask about the relationship between computational complexity and topological entropy, or more precisely: does every universal Turing machine have positive topological entropy? Is every Turing complete dynamical system chaotic? The answer to this question is necessarily delicate, as illustrated by the recent construction of a Turing complete vector field of class $C^\infty$ on $\mathbb S^2$ with zero topological entropy~\cite{CMP3}. 
\medskip

Our goal in this article is {twofold.} {First, we are interested in exploring sufficient computable} conditions under which a universal Turing machine is chaotic in the usual sense of positive topological entropy, {and conditions under which this property holds for a Turing complete dynamical system simulating such a universal Turing machine.} {Secondly, we aim to improve our understanding of the connections between these two different notions of complexity: positive topological entropy and computational universality. As we will see, we establish that these are independent already at the symbolic level: namely, after showing that most examples of universal Turing machines are chaotic, we construct universal Turing machines with zero topological entropy.} The study of Turing machines from a dynamical perspective has been developed by several authors, see e.g.~\cite{Kurka, BCN, KO}. 

\medskip

Under suitable conditions, we will be able to construct dynamical systems that exhibit both undecidable paths and chaotic invariant sets of horseshoe type. Our construction is based on the notion of branching Turing machine, which is presented in Section~\ref{SS.crit}. Our {first contribution is a computable criterion (the ``branching'' property) for a Turing machine to have positive topological entropy.} Positivity of the entropy of a Turing machine was characterized by Jeandel \cite{J}. However, unlike our criterion, this characterization is not computable because determining whether a one-tape Turing machine has positive topological entropy or not is undecidable in general~\cite{GOTA}.

\begin{theorem}\label{T.main}
Any branching Turing machine has positive topological entropy.
\end{theorem}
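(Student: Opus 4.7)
The plan is to produce, inside the symbolic phase space of the Turing machine, an invariant (or return-invariant) Cantor set on which the dynamics factors onto a full shift on $N \geq 2$ symbols, since this immediately yields topological entropy at least $\tfrac{1}{T}\log N > 0$, where $T$ is the time scale of the return.

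First I would unpack what the regularity hypothesis of Section~\ref{SS.crit} supplies. For a natural class of Turing machines containing the standard universal examples, regularity is expected to guarantee the existence of a computation pattern with the following structure: a state/tape block configuration $P$ that, starting from a certain head position, is eventually returned to (possibly up to a uniform tape shift) after a finite number $T$ of steps, together with at least one additional degree of freedom (either a second distinct loop $P'$, or two geometrically distinct placements of $P$ relative to the head) that produces a genuinely different local history. I would begin the proof by isolating from the definition two distinguishable \emph{finite templates} $P_0$ and $P_1$, each implemented by an explicit loop of the transition function, and by recording the bound on the head's displacement during one execution of each template.

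Second, I would use the finite speed of propagation inherent to a Turing machine, the head moves at most one cell per step, to plant translates of $P_0$ and $P_1$ along the tape at positions spaced far enough apart that the computations inside distant blocks cannot interact for a prescribed window of time. Given a bi-infinite binary word $b = (b_i)_{i\in\mathbb Z}$, this construction yields a configuration $x(b)$ of the machine. Taking $n$-step iterates of the induced dynamics and locating the heads at the corresponding blocks, the configurations $x(b)$ and $x(b')$ differ in an observable way whenever $b$ and $b'$ disagree in any coordinate within the spatial window reachable by the head. Standard symbolic bookkeeping then shows that the map $b \mapsto x(b)$ is a continuous injection into the phase space, and that it intertwines the shift on $\{0,1\}^{\mathbb Z}$ with the $T$-th iterate of a first-return map of the Turing machine on the image, up to a tape shift that can be absorbed in the standard quotient.

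Third, I would conclude by a standard entropy estimate: on any compact invariant set that admits a continuous factor map onto the full two-shift, the topological entropy is bounded below by $\log 2$, so the entropy of the original machine is at least $\tfrac{1}{T}\log 2 > 0$. This uses the classical facts that entropy is monotone under factors and multiplicative under iteration.

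The main obstacle I anticipate is verifying, purely from the regularity criterion of Section~\ref{SS.crit}, that one actually gets \emph{two} independent templates rather than a single loop that the head traverses unconditionally; regularity must force a genuine branching, and it must do so in a way compatible with drift, so that concatenating arbitrarily many templates along the tape still produces valid, non-interfering orbits. The second delicate point is handling the possible net displacement of the head after one execution of a template: if the loop is not stationary, the planted blocks must be spaced so that their mutual distances grow faster than the drift accumulated during the relevant time window, which should be achievable by choosing an initial spacing that depends linearly on the chosen observation horizon.
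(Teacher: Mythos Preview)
Your high-level strategy --- extract from regularity two distinct ``return templates'' starting and ending at the same state, concatenate them freely along the tape, and deduce exponential orbit growth --- is exactly the mechanism behind the paper's proof. The paper executes it differently, however: rather than building a semiconjugacy onto the full $2$-shift, it appeals to Oprocha's formula $h(T)=\lim_n \tfrac1n\log|S(n,R_T)|$ and simply counts allowed $(q,t_0)$-words. For $n=ra$ (with $a$ the maximal duration of the two cycles) one writes down, for each binary word $v_1\cdots v_r\in\{u_1,u_2\}^r$, an explicit tape obtained by concatenating the symbol strings $s_1\cdots s_{m_1}$ or $s'_1\cdots s'_{m_2}$; running the machine from $(q_1,\cdot)$ produces $2^r$ distinct words in $S(n,R_T)$, so $h(T)\ge(\log 2)/a$. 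This sidesteps the need to construct an invariant Cantor set, a first-return section, or a factor map, and it handles the unequal cycle durations $a_1\neq a_2$ automatically.

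Your plan has one genuine misreading of the hypothesis that makes the second and fourth paragraphs go in the wrong direction. In Definition~\ref{def:regular} both cycles share the \emph{same} $\varepsilon\in\{-1,1\}$, so after each application of $\phi$ the head advances one cell in a fixed direction and never revisits a cell it has already processed. The drift is therefore not an obstacle to be controlled by spacing blocks far apart --- it is precisely what makes the argument work: the templates are laid down \emph{adjacently}, the head reads through them in order, and there is nothing ``in between'' to worry about. Your proposed device of spreading the blocks and invoking finite propagation speed is unnecessary and, as stated, incomplete (you would have to specify the tape between blocks and argue the head traverses those gaps in a controlled way, which the definition does not provide). Once you drop that and place the templates consecutively, your factor-onto-the-shift argument can be made to work (using a first-return map to the set $\{x:x_q=q_1\}$ to absorb the unequal cycle lengths), and it then yields the same bound $h(T)\ge(\log 2)/a$ as the paper.
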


{Since the criterion is computable, it can be used to identify particular examples of universal Turing machines with positive entropy. Moreover, we are not aware of any examples in the literature of non-branching universal Turing machines}. We give in Section~\ref{appendix}  a tour-de-force construction of such a machine (indeed, one with zero entropy). 
In Section~\ref{S.app}, we relate the topological entropy of a Turing machine with the topological entropy of a dynamical system that simulates it (in some precise sense, in particular with a continuous encoding). Other works that have analyzed the relationship between the topological entropy (and its computability) of symbolic systems and more general dynamical systems are~\cite{R1, R2}. 

Combining Theorem~\ref{T.main} with the construction of Turing complete diffeomorphisms of the disk presented in~\cite{CMPP2} and the continuity of the encodings used there, we easily get the following corollary:

\begin{corollary}
{Any Turing complete smooth area-preserving diffeomorphism of the disk $\varphi: D\rightarrow D$ as constructed in~\cite{CMPP2} necessarily has positive topological entropy whenever the simulated universal Turing machine is branching.}
\end{corollary}


The technique introduced in~\cite{CMPP2}, which is based on the suspension of Turing complete area-preserving diffeomorphisms of the disk, immediately yields the construction of Reeb flows on $\mathbb S^3$ {whose chaotic behavior is already inherited from its Turing universality}.  As argued in~\cite{CMPP2} there are many compatible Riemannian metrics $g$ that make these Reeb flows stationary solutions of the Euler equations on $(\mathbb S^3,g)$. Incidentally, these metrics cannot be optimal (or critical) because the Reeb flows have positive topological entropy~\cite{MPS}.

{As we anticipated, our last result is that, surprisingly, there exist universal Turing machines with zero topological entropy (in fact, with zero speed, which is a stronger condition). It can be required to be reversible too.
\begin{theorem}\label{T.main2}
    There exists a universal Turing machine (which can be required to be reversible) that has zero speed. In particular, it has zero topological entropy.
\end{theorem}
}

This article is organized as follows. In Section~\ref{S.TM} we recall the usual interpretation of Turing machines as dynamical systems on compact metric spaces and prove some auxiliary results. In Section~\ref{S.entro} we define the topological entropy of a Turing machine and show how it is related to the usual definition of topological entropy in dynamics. Theorem~\ref{T.main} is proved in Section~\ref{SS.crit}, its corollary in Section~\ref{S.app} and the example of a non-branching, zero-entropy Turing machine (Theorem~\ref{T.main2}) is presented in Section~\ref{appendix}. 

\section{Turing machines as dynamical systems}\label{S.TM}

In this section, we explain how to define a continuous dynamical system on a compact metric space using a Turing machine, and other notions related to Turing machines.

\subsection{The global transition function}\label{SS.gtf}

A Turing machine $T=(Q,q_0,q_{halt},\Sigma,\delta)$ is defined by:
\begin{itemize}
\item A finite set $Q$ of ``states'' including an initial state $q_0$ and a halting state $q_{halt}$.
\item A finite set $\Sigma$ which is the ``alphabet'' with cardinality at least two. It has a special symbol, denoted by $0$, that is called the blank symbol.
\item A transition function $\delta:Q\setminus \{q_{halt}\}\times \Sigma \longrightarrow Q\times \Sigma \times \{-1,0,1\}$.
\end{itemize}
{For technical reasons, for each Turing machine, we assume that its state set is disjoint from its alphabet set, and in fact from the alphabet sets of any other Turing machine}

The evolution of a Turing machine is described by an algorithm. At any given step, the configuration of the machine is determined by the current state $q\in Q$ and the current tape $t=(t_n)_{n\in \mathbb{Z}}\in \Sigma^\mathbb{Z}$. The pair $(q,t)$ is called a configuration of the machine. Any real computational process occurs throughout configurations such that every symbol in the tape $t$ is $0$ except for finitely many symbols. A configuration of this type will be called compactly supported.

The algorithm is initialized by setting the current configuration to be $(q_0,s)$, where $s=(s_n)_{n\in \mathbb{Z}}\in \Sigma^{\mathbb{Z}}$ is the input tape. Then the algorithm runs as follows:

\begin{enumerate}
\item Set the current state $q$ as the initial state and the current tape $t$ as the input tape.
\item If the current state is $q_{halt}$, then halt the algorithm and return $t$ as output. Otherwise, compute $\delta(q,t_0)=(q',t_0',\varepsilon)$, with $\varepsilon \in \{-1,0,1\}$.
\item Replace $q$ with $q'$, and change the symbol $t_0$ by $t_0'$, obtaining the tape $\tilde t=...t_{-1}.t_0't_1...$ (as usual, we write a point to denote that the symbol at the right of that point is the symbol at position zero).
\item Shift $\tilde t$ by $\varepsilon$ obtaining a new tape $t'$, then return to step $(2)$ with the current configuration $(q',t')$. Our convention is that $\varepsilon=1$ (resp. $\varepsilon=-1$) corresponds to the left shift (resp. the right shift).
\end{enumerate}

Given a Turing machine $T$, its transition function can be decomposed as
$$\delta=(\delta_Q,\delta_\Sigma,\delta_\varepsilon) : Q\times \Sigma \rightarrow Q\times \Sigma\times \{-1,0,1\}\,.$$
Here the maps $\delta_Q,\delta_\Sigma$ and $\delta_\varepsilon$ denote the composition of $\delta$ with the natural projections of $Q\times \Sigma\times \{-1,0,1\}$ onto the corresponding factors. The Turing machine can be understood as a dynamical system $(R_T,X_T)$, where the phase space is
$$X_T:=Q\times \Sigma^{\mathbb{Z}}$$
and the action
$$R_T:X_T\rightarrow X_T$$
is the \emph{global transition function}, which is given by
\begin{equation}\label{turing_Def}
	R_T(q,(\dots t_{-1}.t_0t_1\dots)):=
	\begin{cases}
		(q',(\dots t_{-1}t_0'.t_1\dots))\,, &\text{ if }\delta_{\varepsilon}(q,t_0)=1\,,\\
		(q',(\dots t_{-2}.t_{-1}t_0'\dots))\,, &\text{ if }\delta_{\varepsilon}(q,t_0)=-1\,,\\
		(q',(\dots,t_{-1}.t_0't_1\dots))\,, &\text{ if }\delta_{\varepsilon}(q,t_0)=0\,,\\
	\end{cases}
\end{equation}
for $q\neq q_{halt}$, with $t_0'=\delta_\Sigma(q,t_0)$ and $q'=\delta_Q(q,t_0)$. For $q=q_{halt}$, several extensions of the global transition function exist, the simplest and most natural being
\begin{equation}\label{turing_Def2}
R_T(q_{halt},(\dots t_{-1}.t_0t_1\dots)):=(q_{halt},(\dots t_{-1}.t_0t_1\dots))\,.
\end{equation}
This is equivalent to extending the transition function on halting configurations as $\delta(q_{halt},t_0):=(q_{halt},t_0,0)$; all along this article we shall assume that the transition function is extended this way. We will use the notation 
$$X_T^c =\{\text{compactly supported configurations}\}\,,$$ 
that is, the set of configurations of $T$ with a tape that has only finitely many non-blank symbols.

\subsection{Properties of $X_T$ and $R_T$}
Given a Turing machine $T$, for each $x=(q,t)\in X_T$ let us set $x_q:= q$ and $x_i:= t_i$. If we endow the finite sets $Q$ and $\Sigma$ with the discrete topology, $X_T$ becomes a compact metric space endowed with the complete metric
\begin{equation}\label{eq.dist}
	d\left(x,x'\right) := \begin{cases}
		1, &\text{ if }x_q\neq x'_q\,,\\
		2^{-n}, &\text{ if }x_q = x'_q \text{ and } n=\operatorname{sup} \{ k : x_i=x'_i \ \forall  |i|<k\}\,.
	\end{cases}
\end{equation}
It is elementary to check the continuity of the global transition function for this metric:
\begin{lemma}\label{L.contR}
The global transition function $R_T:X_T\to X_T$ is continuous for the metric $d$.
\end{lemma}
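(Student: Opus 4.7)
The plan is to show that $R_T$ is Lipschitz with constant at most $2$ on pairs of points that are close in $(X_T,d)$, from which continuity is immediate. The key observation is that the formula~\eqref{turing_Def} defining $R_T$ reads only the finite local datum $(x_q,x_0)$ in order to decide which branch to apply; once two inputs $x,x'\in X_T$ agree on this datum, the same transition rule (same new state, same overwritten symbol, same shift direction) is applied to both, and any position at which the outputs disagree can be traced back to a position at which $x$ and $x'$ already disagreed.

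First I would reduce to the case where $x$ and $x'$ share the same state and the same symbol at position $0$. By inspection of~\eqref{eq.dist}, $d(x,x')<1$ is equivalent to $x_q=x'_q$ together with $x_0=x'_0$, so in this regime $\delta(x_q,x_0)=\delta(x'_q,x'_0)$ and the same case of~\eqref{turing_Def} is executed on both inputs. The halting extension~\eqref{turing_Def2} is even easier, as $R_T$ acts as the identity on all configurations with halting state, and that identity piece is trivially continuous.

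Next, assuming $d(x,x')=2^{-n}$ with $n\geq 1$ (so that states agree and $x_i=x'_i$ for all $|i|<n$), I would compare $R_T(x)$ and $R_T(x')$ by running through the three cases $\delta_\varepsilon(x_q,x_0)\in\{-1,0,1\}$ of~\eqref{turing_Def}. In each case the position that is overwritten with the common symbol $\delta_\Sigma(x_q,x_0)$ contributes an automatic agreement between the two outputs, while every other position of the output tape equals an input symbol shifted by $-\delta_\varepsilon(x_q,x_0)$. A short index bookkeeping shows that the two outputs agree at least on the window $|i|<n-1$ in the shift cases and on $|i|<n$ in the no-shift case, yielding
\[
 d(R_T(x),R_T(x'))\leq 2^{-(n-1)}=2\,d(x,x').
\]

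The conclusion follows by choosing, for any $\eta>0$, $\delta:=\min(1/2,\eta/2)$, so that $d(x,x')<\delta$ forces $d(R_T(x),R_T(x'))<\eta$. I do not foresee any real obstacle: the only point needing care is the index bookkeeping under the left and right shifts, where one must verify that at most one unit of the central agreement window is lost, and this is precisely the source of the factor $2$ in the Lipschitz constant.
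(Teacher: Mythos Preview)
Your argument is correct. The paper does not actually supply a proof of this lemma; it simply introduces it with the remark ``It is elementary to check the continuity of the global transition function for this metric,'' and states Lemma~\ref{L.contR} without further justification. Your proposal therefore goes beyond what the paper provides by making the elementary verification explicit, and it does so via a clean Lipschitz estimate: showing that whenever $d(x,x')<1$ (equivalently $x_q=x'_q$ and $x_0=x'_0$), the same branch of~\eqref{turing_Def} is applied and one loses at most one unit of the central agreement window under a shift, yielding $d(R_T(x),R_T(x'))\le 2\,d(x,x')$. This is exactly the kind of direct verification the authors had in mind, so there is nothing to compare; your write-up simply fills in the omitted details.
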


The metric $d$ defines a topology in the compact space $X_T$. Then, several natural sets and functions are open and continuous for this topology. Particularly important are the halting domain and the halting time:

\begin{definition}[Halting domain and halting time]
Let $T$ be a Turing machine with corresponding global transition function $R_T$ acting on the phase space $X_T$. As usual, the \emph{halting domain} of $T$ is defined as the set
\begin{equation}
	X^{H}_T:=\{x\in \{q_0\}\times \Sigma^{\mathbb{Z}} : \exists N\in \mathbb{N} \text{ such that } {R_T^N}\left(x\right)_q = q_{halt}\}\,.
\end{equation}
The first number $N\in \mathbb{N}$ such that ${R_T^N}(x)_q=q_{halt}$ is called the \emph{halting time} of $x$.
\end{definition}

The following lemma is standard and shows that the halting domain $X^H_T$ is open in $X_T$ and the halting time $N$ is continuous (both properties with respect to the natural topology on $X_T$ introduced before). For the benefit of a non-expert reader we provide a proof.

\begin{lemma}\label{L:contN}
The halting domain of a Turing machine $T$ is open in $X_T$, and the halting time function $N:X^H_T\to \mathbb{N}$ is continuous.
\end{lemma}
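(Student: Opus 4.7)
The plan is to combine the continuity of the global transition function (Lemma~\ref{L.contR}) with the fact that the state component of $X_T$ is a discrete factor, which will make ``being in the halt state at step $k$'' an open-and-closed condition. Because the extension~\eqref{turing_Def2} makes $q_{halt}$ absorbing, the halting time of a point $x\in X_T^H$ is unambiguously defined as the \emph{first} integer $N$ with $R_T^N(x)_q = q_{halt}$, and this is what I will show is locally constant.

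More concretely, I would first observe that $A := \{y \in X_T : y_q = q_{halt}\}$ is clopen in $X_T$, since $Q$ carries the discrete topology and $A$ is the preimage of $\{q_{halt}\}$ under the projection $X_T\to Q$. By Lemma~\ref{L.contR} each iterate $R_T^k$ is continuous, so every set $(R_T^k)^{-1}(A)$ and $(R_T^k)^{-1}(X_T\setminus A)$ is clopen too. Given $x\in X_T^H$ with halting time $N := N(x)$, I would then define
\begin{equation*}
U_x \;:=\; (R_T^N)^{-1}(A) \;\cap\; \bigcap_{k=0}^{N-1} (R_T^k)^{-1}(X_T\setminus A),
\end{equation*}
which is a finite intersection of open sets and hence open. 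By the choice of $N$ as the first halting time of $x$, we have $x\in U_x$. For any $x'\in U_x$, the defining conditions force $R_T^k(x')_q \ne q_{halt}$ for $k=0,\dots,N-1$ and $R_T^N(x')_q = q_{halt}$, so $x'\in X_T^H$ and its halting time equals $N$.

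From these two facts, both conclusions of the lemma follow immediately: $X_T^H$ is the union of the open neighborhoods $U_x$ over $x\in X_T^H$ and is therefore open, while $N$ is constant on each $U_x$, hence locally constant, hence continuous as a map into the discrete space $\mathbb{N}$. I do not expect any serious obstacle here; the only subtlety is verifying that the halting time is well-defined as a first-hitting time, which is built into the absorbing extension at $q_{halt}$. A more hands-on alternative would be to note combinatorially that in $N$ steps the head reads at most the positions $|i|\le N-1$, so tapes agreeing with $x$ on a large enough window produce the same first $N$ iterates; but the topological argument via Lemma~\ref{L.contR} is cleaner and avoids this bookkeeping.
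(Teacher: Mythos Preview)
Your argument is correct in spirit, with one small oversight: by definition $X_T^H \subset \{q_0\}\times\Sigma^{\mathbb{Z}}$, but your neighbourhood $U_x$ as written may contain points whose state component is neither $q_0$ nor $q_{halt}$, so the implication ``$x'\in U_x \Rightarrow x'\in X_T^H$'' is not quite right. The fix is immediate: replace $U_x$ by $U_x \cap (\{q_0\}\times\Sigma^{\mathbb{Z}})$, which is still open since $\{q_0\}\times\Sigma^{\mathbb{Z}}$ is clopen. (The paper itself makes this intersection explicit when proving openness of $X_T^H$.)

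Your route is genuinely different from the paper's for the continuity of $N$. The paper argues \emph{combinatorially}: in $n$ steps the head can access at most the cells with indices in $[-n,n]$, so any configuration agreeing with $x$ on that window produces the same first $n$ iterates, whence $N^{-1}(n)$ contains the ball of radius $2^{-n}$ around $x$. You instead argue \emph{purely topologically}: the clopen set $A=\{y:y_q=q_{halt}\}$ pulls back under each continuous iterate $R_T^k$ to a clopen set, and a finite intersection of such sets isolates the first hitting time. Your approach is cleaner in that it uses only Lemma~\ref{L.contR} and nothing about the specific locality of the transition rule; the paper's approach, on the other hand, gives an explicit quantitative radius $2^{-n}$ for the ball on which $N$ is constant, which your argument does not produce directly. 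Interestingly, you already anticipate this comparison in your final sentence: the ``hands-on alternative'' you sketch there is exactly the paper's proof.
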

\begin{proof}
To see that $X^H_T$ is open we simply notice that
\begin{equation*}
X^H_T = \left(\bigcup_{n\geq 1} \{x\in X_T: R_T^n\left(x\right)_q = q_{halt}\}\right)\cap \left(\{q_0\}\times \Sigma^{\mathbb{Z}}\right)\,.
\end{equation*}
Since the first term is the union of open sets, and the second term is also open, the claim follows.

To see the continuity of $N$, we observe that for a fixed $x\in X_T$, since $n:=N(x)$ is finite, the machine can read (at most) the cells of the sequence $x$ in positions $[-n,n]$. Therefore, any other input coinciding with $x$ in the range $[-n,n]$ will halt after the same number of steps. We then conclude that for each natural number $n$, the set $N^{-1}(n)\subseteq X_T$ contains an open ball of radius $2^{-n}$ and center $x$, which implies the continuity of $N$.
\end{proof}

It is also convenient to introduce the notion of output function of a Turing machine, which assigns to the halting domain $X^H_T$ the set of elements in $X_T$ that are reached when halting. Next, we introduce the precise definition and the key property that the output function is continuous.

\begin{definition}[Output function]
For each Turing machine $T$, we define the output function $\Psi_T:X^{H}_T\to X_T$ as
\begin{equation}
	\Psi_T(x):=R_T^{N(x)}(x) \quad \forall x\in X^H_T\,,
\end{equation}
where $N(x)$ is the halting time function.
\end{definition}

\begin{lemma}\label{L:output}
The output function is continuous.
\end{lemma}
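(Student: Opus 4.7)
The plan is to reduce the continuity of $\Psi_T$ to a local equality with a fixed finite iterate of $R_T$. The key observation is that the halting time function $N : X^H_T \to \mathbb{N}$ takes values in a discrete space, so by Lemma~\ref{L:contN} the preimage $N^{-1}(\{n\})$ of any value $n$ is open in $X^H_T$. This means that $N$ is in fact \emph{locally constant}.

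Concretely, I would proceed as follows. Fix $x \in X^H_T$ and set $n := N(x)$. By Lemma~\ref{L:contN}, there exists an open neighborhood $U \subseteq X^H_T$ of $x$ on which $N \equiv n$ (the proof of Lemma~\ref{L:contN} even exhibits an explicit such neighborhood, namely the ball of radius $2^{-n}$ centered at $x$). On this neighborhood, the definition of $\Psi_T$ gives
\begin{equation*}
\Psi_T(y) \;=\; R_T^{N(y)}(y) \;=\; R_T^{n}(y) \qquad \text{for every } y \in U.
\end{equation*}
Hence $\Psi_T|_U = R_T^n|_U$ is a finite iterate of the continuous map $R_T$ (Lemma~\ref{L.contR}), and is therefore continuous at $x$. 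Since $x$ was arbitrary in $X^H_T$, the output function $\Psi_T$ is continuous on its whole domain.

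There is essentially no obstacle here: the argument is a routine composition-of-continuous-maps argument once one realizes that the discreteness of $\mathbb{N}$ upgrades continuity of $N$ to local constancy. The only thing to be slightly careful about is that $\Psi_T$ is only defined on the open set $X^H_T$, but this is harmless since continuity is a local property and $X^H_T$ inherits the subspace topology from the compact metric space $X_T$.
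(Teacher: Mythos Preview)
Your proof is correct and follows essentially the same approach as the paper: use the local constancy of $N$ (from Lemma~\ref{L:contN}) to write $\Psi_T$ locally as a fixed iterate $R_T^n$, then invoke the continuity of $R_T$ from Lemma~\ref{L.contR}. The paper's argument is identical in structure, differing only in notation (it uses $z$ and a ball $B(z)$ rather than $x$ and $U$).
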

\begin{proof}
Fix $z\in X^H_T$. Since the halting time $N$ is continuous, then it is locally constant, so there is a ball $B(z)\subset X^H_T$ around $z$ where $N(x)=N(z)$ for all $x\in B(z)$. Therefore, locally we can write $\Psi_T(x)=R_T^{n}(x)$ for all $x \in B(z)$, with $n:=N(z)$. Since $R_T$ is continuous (cf. Lemma~\ref{L.contR}), it follows that $\Psi_T$ is continuous in $B(z)$ for all $z$, so $\Psi_T$ is a continuous function.
\end{proof}

\subsection{Universal Turing machines}
Finally, we introduce a definition of \emph{universal Turing machine (UTM)} following Morita in~\cite{Mor} (which is based on~\cite{Rog}). We remark that this definition is more general than the classical ones used by Shannon~\cite{Sha} and Minsky~\cite{Min} in their foundational works.

Let $\{T_{n}\}_{n=1}^\infty$ be an enumeration of all Turing machines and define the union
$$\mathcal{X}:=\bigcup_{n\geq 1}X_{T_n}^c$$
of all compactly supported configurations of $T_n$. {This space is countable, and elements of $\mathcal{X}$ are naturally in bijection with finite words of the form $\Sigma^* Q \Sigma^*$ (here, $\Sigma^*$ denotes the finite words over $\Sigma$), where neither the first nor last symbol is blank. Specifically, the word $uqv$ where $u,v \in \Sigma^*$ and $q \in Q$ denotes the configuration $(q, \ldots 000 u . v 000 \ldots)$. For all computational purposes, we think of $x \in X_{T_n}^c$ as being represented by such a word.}

\begin{definition}[Universal Turing machine]\label{def:UTM}
     A Turing machine $U$ is universal if there exist total computable functions $c:\mathbb{N}\times \mathcal{X}\to X_U^c$ and $d:X_U^c\to \mathcal X$ such that for each $n\in \mathbb{N}$,
\begin{equation}
	\Psi_{T_n}(x) = (d\circ \Psi_U \circ c(n,\cdot ))(x) \quad \forall x \in X_{T_n}^c\,.
\end{equation}
\end{definition}

{Recall that a total computable function is simply a function that is itself computed by a Turing machine.}

 
Most examples of universal Turing machines satisfy Definition~\ref{def:UTM}; examples can be found in \cite{Sha, Min, Mor}. There are also some examples of Turing machines in the literature, which have been shown universal in some weaker sense involving non-compact support configurations, see for example \cite{Sm}. A particularly well-known property of universal Turing machines is that the halting problem for compactly supported inputs is undecidable for these machines.

\section{Topological entropy of Turing machines}\label{S.entro}

The topological entropy of Turing machines has been studied in~\cite{Op} from a dynamical systems viewpoint, and its computability was analyzed by Jeandel in~\cite{J}, by working with the entropy as is done with the speed of the machine. In this section, we recall Oprocha's formula to compute the topological entropy of a Turing machine and introduce Moore's generalized shifts as a model to describe the dynamics of any Turing machine. As we will see, the topological entropy of the generalized shift coincides with that of the Turing machine it simulates.

Let $T$ be a Turing machine. As argued in Section~\ref{S.TM}, it can be described using the global transition function $R_T$, which is a continuous dynamical system on the compact metric space $(X_T,d)$. In this setting, one can use the definition of topological entropy given by Bowen and Dinaburg~\cite{Bo,Din}, which is equivalent to the original one by Adler, Konheim, and McAndrew~\cite{Ad}. 

In~\cite[Theorem~3.1]{Op} Oprocha obtained a remarkable formula showing that the topological entropy of $T$ can be computed as the following limit:
\begin{equation}\label{eq.opr}
h(T) := \lim_{n\to \infty} \frac{1}{n}\log |S(n,R_T)|\,,
\end{equation}
where $|\cdot|$ denotes the cardinality of the finite set $S(n,R_T)$, which is defined as
\begin{equation}
S(n,R_T):= \{u\in (Q\times \Sigma)^n : \exists x\in X_T \text{ s.t. } u_i = (R_T^{i-1}(x)_q,R_T^{i-1}(x)_0)\,\}\,,
\end{equation}
where $u_i$ is defined for $i=1,...,n$ and denotes the $i^{\text{th}}$ component of $u$.
Here, $R_T^j$ denotes the $j$-th iterate of the map $R_T$. Usually, the set $S(n,R_T)$ is called the set of $n$-words allowed for the Turing machine $T$.

In~\cite{Mo2} Moore introduced a generalization of the shift map that he called a \emph{generalized shift}, which is a class of dynamical systems that allows one to describe any Turing machine, and is different from the global transition function $R_T$. Let us introduce Moore's idea and how it connects with the dynamics and topological entropy of $(R_T,X_T)$. These results will be key to prove the existence of Turing complete diffeomorphisms of the disk with positive topological entropy, cf. Section~\ref{S.app}.

\begin{definition}[Generalized shift]
Let $A$ be a finite set. For each $s\in A^\mathbb{Z}$ and $J,J'\in\mathbb Z$, we denote by $s_{[J,J']}$ the finite string containing the elements of $s$ in positions $J$ to $J'$, and we denote by $\oplus$ the operation of string concatenation. A generalized shift is a map $\Delta : A^\mathbb{Z}\to A^\mathbb{Z}$ that is given by
\begin{equation*}
\Delta(s) = \sigma^{F\left(s_{[-r,r]}\right)}\left(\dots s_{(-\infty,-r-1]}\oplus G(s_{[-r,r]})\oplus s_{[r+1,\infty)}\right)\,.
\end{equation*}
Here, $r$ is a natural number, $\sigma$ is the Bernoulli shift and $F$ and $G$ are maps
\begin{align*}
	F &: A^{2r+1}\to \mathbb{Z}\,,\\
	G &: A^{2r+1}\to A^{2r+1}\,.
\end{align*}
\end{definition}

As in Section~\ref{SS.gtf}, if we endow $A$ with the discrete topology then $A^{\mathbb{Z}}$ is a compact metric space and the generalized shift $\Delta$ is always continuous (independently of the choice of $r$, $F$, and $G$). We observe that the complete metric is defined as in the second formula of Equation~\eqref{eq.dist}.

\begin{remark}\label{R.Cantor}
Without any loss of generality one may assume that $A = \{0,1\}$, and thus $A^{\mathbb{Z}}$ is homeomorphic to the square ternary Cantor set $C^2\subset \mathbb{R}^2$ via the homeomorphism given by
\begin{align}\label{eq:coding}
e(s) = \left(\sum_{k=1}^{\infty} s_{-k}\frac{2}{3^k}, \ \sum_{k=1}^{\infty} s_{k-1}\frac{2}{3^k}\right)\,.
\end{align}
Accordingly, any generalized shift can be viewed as a dynamical system on the square Cantor set. \qer
\end{remark}

The connection between Turing machines and generalized shifts was established by Moore~\cite{Mo2}. Let $T$ be a Turing machine with set of states $Q$ and set of symbols $\Sigma$, and define $A=Q\cup \Sigma$. With $X_T=Q\times \Sigma^{\mathbb{Z}}$, we also define the injective map
\begin{align}
\varphi : X_T & \to A^{\mathbb{Z}}\\
(q,t)&\mapsto (\dots t_{-1}.qt_0\dots)\,.
\end{align}
Then:
\begin{theorem}[Moore]\label{theorem_moore}
Given a Turing machine $T$, there exists a generalized shift $\Delta$ on $A^{\mathbb Z}$ such that its restriction to $\varphi(X_T)$ satisfies
\begin{equation}
	\Delta\vert_{\varphi(X_T)} = \varphi \circ R_T \circ \varphi^{-1}\vert_{\varphi(X_T)}.
\end{equation}
\end{theorem}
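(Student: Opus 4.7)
The idea is to take $r=1$ and alphabet $A=Q\sqcup\Sigma$, and to read $F$ and $G$ directly off the transition function $\delta$. Sequences in $\varphi(X_T)$ are precisely those with a state symbol at position $0$ and tape symbols at all other positions, so the window $s_{[-1,1]}$ of any such sequence has the form $(\alpha,q,\beta)\in \Sigma\times Q\times \Sigma$. Crucially, this window contains exactly the pair $(q,t_0)$ on which $\delta$ acts, together with the neighboring tape symbol $\alpha=t_{-1}$ needed to simulate the tape-shift step of $R_T$.

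For windows of that form, write $\delta(q,\beta)=(q',\beta',\varepsilon)$ and define $F(\alpha,q,\beta):=\varepsilon$ together with
\begin{equation*}
G(\alpha,q,\beta):=\begin{cases}(\alpha,q',\beta') & \text{if } \varepsilon=0,\\ (\alpha,\beta',q') & \text{if } \varepsilon=1,\\ (q',\alpha,\beta') & \text{if } \varepsilon=-1.\end{cases}
\end{equation*}
On any window not of this form (which cannot occur inside $\varphi(X_T)$) set $G$ to the identity and $F=0$. These data define a generalized shift $\Delta$ on $A^{\mathbb{Z}}$.

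The remaining step is a direct verification, case-by-case in $\varepsilon\in\{-1,0,1\}$, that $\Delta(\varphi(q,t))=\varphi(R_T(q,t))$. For $\varepsilon=0$ no shift occurs and the window update replaces $(t_{-1},q,t_0)$ by $(t_{-1},q',t_0')$, matching $R_T$. For $\varepsilon=1$, after the window is replaced by $(t_{-1},t_0',q')$ and $\sigma$ is applied, position $0$ acquires $q'$, position $-1$ acquires $t_0'$, and position $-2$ acquires $t_{-1}$, which is exactly what a left shift of the tape produces under the identification $\varphi$. The case $\varepsilon=-1$ is symmetric, using $\sigma^{-1}$. The computation also shows that $\Delta$ preserves $\varphi(X_T)$, so the intertwining identity $\Delta|_{\varphi(X_T)}=\varphi\circ R_T\circ\varphi^{-1}|_{\varphi(X_T)}$ holds on the whole invariant set.

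The only subtlety is aligning the ``modify window, then apply $\sigma^{\varepsilon}$'' order of the generalized shift with the ``update $(q,t_0)$, then shift the tape'' convention of Section~\ref{SS.gtf}, keeping careful track of the direction of $\sigma^{\pm 1}$; once the correct $G$ and $F$ are written down, each case reduces to reading off five entries of the sequence. The halting configurations need no special treatment, since the extension $\delta(q_{halt},\cdot)=(q_{halt},\cdot,0)$ forces $G=\mathrm{id}$ and $F=0$ on the relevant windows, so $\Delta$ fixes every point of $\varphi(\{q_{halt}\}\times\Sigma^{\mathbb{Z}})$, in agreement with $R_T$.
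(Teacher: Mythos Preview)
Your construction is correct: with $r=1$, reading the window $(t_{-1},q,t_0)$, and defining $F$ and $G$ exactly as you do, the three cases $\varepsilon\in\{-1,0,1\}$ check out line by line against the definition~\eqref{turing_Def} of $R_T$ and the encoding $\varphi$. The extension to windows not of the form $\Sigma\times Q\times \Sigma$ is irrelevant on $\varphi(X_T)$ and your choice is as good as any; your remark about halting configurations is also right given the paper's extension~\eqref{turing_Def2}.

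There is nothing to compare your argument against, because the paper does not supply a proof of Theorem~\ref{theorem_moore}: it is stated as Moore's result and attributed to~\cite{Mo2}. Your write-up is essentially the standard construction one finds when unpacking Moore's idea, and it would serve perfectly well as a self-contained proof here. The only cosmetic point worth tightening is the sign convention for $\sigma$: the paper introduces $\sigma$ only as ``the Bernoulli shift'' without fixing a direction, so your choice $F=\varepsilon$ implicitly takes $\sigma$ to be the left shift $(\sigma s)_n=s_{n+1}$, consistent with the paper's convention that $\varepsilon=1$ is a left tape shift. Stating this explicitly would remove any ambiguity.
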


In fact, the map $\varphi : X_T \to A^{\mathbb{Z}}$ is more than injective, it is a topological conjugation between the dynamical systems $\Delta$ and $R_T$. This will allow us to relate both dynamics. The following lemma is standard, but we provide a proof for the benefit of a non-expert reader.

\begin{lemma}\label{L:homeo}
The map $\varphi$ is a homeomorphism onto its image.
\end{lemma}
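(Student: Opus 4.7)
The plan is to invoke the standard fact that any continuous bijection from a compact space onto a Hausdorff space is automatically a homeomorphism. Since $X_T = Q \times \Sigma^{\mathbb Z}$ is a product of finite discrete spaces, it is compact; the target $A^{\mathbb Z}$ is metric and hence Hausdorff; and $\varphi$ is injective by construction (given $\varphi(q,t)$, read off $q$ at position $0$ and recover the tape from the remaining positions). Thus the whole task reduces to verifying that $\varphi \colon X_T \to A^{\mathbb Z}$ is continuous, after which restricting to $\varphi(X_T)$ yields the desired homeomorphism onto the image.

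To check continuity I would compare the metric $d$ on $X_T$ from \eqref{eq.dist} with the analogous metric on $A^{\mathbb Z}$ (which is just the second clause of \eqref{eq.dist}, since there is no state component). Suppose $d(x,x') \leq 2^{-n}$ with $n \geq 1$, so $x_q = x'_q =: q$ and $t_i = t'_i$ for every $|i| < n$. Unpacking $\varphi(q,t) = (\dots t_{-1}.q\, t_0 t_1 \dots)$, position $0$ of $\varphi(x)$ and $\varphi(x')$ is $q$ in both cases; for $1 \leq k \leq n$ position $k$ holds $t_{k-1}$ resp.\ $t'_{k-1}$, which coincide because $|k-1| \leq n-1 < n$; and for $-(n-1) \leq k \leq -1$ position $k$ holds $t_k$ resp.\ $t'_k$, which coincide for the same reason. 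Therefore $\varphi(x)$ and $\varphi(x')$ agree on every position with $|k| \leq n-1$, giving a distance of at most $2^{-n}$ in $A^{\mathbb Z}$. The edge case $d(x,x') = 1$ is trivial. In summary $\varphi$ is $1$-Lipschitz, hence continuous.

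Putting these ingredients together finishes the proof: $\varphi(X_T)$ is the continuous image of a compact set, hence compact, and $\varphi \colon X_T \to \varphi(X_T)$ is a continuous bijection from a compact space onto a Hausdorff space, therefore a homeomorphism. There is no genuine obstacle here; the only thing to be careful about is the asymmetric index shift in $\varphi$, which places $t_n$ at position $n+1$ for $n \geq 0$ but at position $n$ for $n < 0$, and this only shifts the Lipschitz constant by a harmless factor that does not affect the conclusion.
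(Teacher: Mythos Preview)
Your argument is correct. The continuity check for $\varphi$ matches the paper's almost verbatim; the difference lies in how you handle the inverse. The paper verifies directly that $\varphi^{-1}\vert_{\varphi(X_T)}$ is continuous by an explicit $\varepsilon$--$\delta$ computation analogous to the forward direction (showing it is roughly $2$-Lipschitz because of the index shift you mention). You instead appeal to the compact--Hausdorff principle: since $X_T$ is compact and $A^{\mathbb Z}$ is metric, the continuous bijection $\varphi:X_T\to\varphi(X_T)$ is automatically a homeomorphism. Your route is shorter and avoids the second metric estimate; the paper's route is more explicit and in particular does not rely on compactness of $X_T$, so it would still work if one ever needed the statement for a non-compact subspace.
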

\begin{proof}
Let us see that $\varphi$ is continuous. Indeed, for any $\varepsilon>0$ choose $k$ such that $\varepsilon>2^{-k}$. Let $x,x'\in X_T$ be such that $d(x,x')<2^{-k}$. This means that
\begin{equation*}
x_q = x'_q \text{ and } x_i=x'_i \ \text{ for all } \abs{i}<k\,.
\end{equation*}
Then, clearly
\begin{equation*}
d\left(\varphi(x),\varphi(x')\right) < 2^{-k} < \varepsilon\,,
\end{equation*}
thus implying continuity.

Similarly we may show that $\varphi^{-1}\vert_{\varphi(X_T)}$ is continuous. Fix $\varepsilon>0$, let $k$ be such that $\varepsilon>2^{-k}$ and let $y,y'\in \varphi(X_T)\subset A^{\mathbb{Z}}$ such that $d(y,y')<2^{-(k+1)}$. That is, $y_i=y'_i$ for all $\abs{i}<k+1$. Note that since $y$ and $y'$ are in $\varphi(X_T)$, they are of the form
\begin{equation*}
y = y_{(-\infty,-1]}\oplus y_0\oplus y_{[1,\infty)}
\end{equation*}
with $y_0\in Q$ and $y_i\in \Sigma$ for all $i\neq 0$. Then, clearly
\begin{equation*}
d\left(\varphi^{-1}(y),\varphi^{-1}(y')\right) < 2^{-k}\,,
\end{equation*}
and the lemma follows.
\end{proof}

For the purposes of this article, the main consequence of the aforementioned conjugation between Turing machines and generalized shifts is the following result, which shows the connection between the topological entropy of both systems. Since a generalized shift $\Delta$ is a map on a compact metric space, its topological entropy $h(\Delta)$ can be computed using Bowen-Dinaburg's definition, as before.

\begin{proposition}\label{prop:entropyGS}
Let $T$ be a Turing machine and $\Delta$ its associated generalized shift. Then $h(\Delta)\geq h(T)$. In particular, if $T$ has positive topological entropy, then so does its associated $\Delta$.
\end{proposition}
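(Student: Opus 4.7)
The plan is to view $(R_T, X_T)$ as a subsystem of $(\Delta, A^{\mathbb{Z}})$ via the map $\varphi$ and then invoke two standard facts: invariance of topological entropy under topological conjugacy, and monotonicity of topological entropy under restriction to closed invariant subsets. Since Oprocha's formula~\eqref{eq.opr} equals the Bowen--Dinaburg entropy of $R_T$ on $(X_T,d)$, it will suffice to prove $h(\Delta) \geq h(R_T)$.

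First, I would check that $\varphi(X_T) \subseteq A^{\mathbb{Z}}$ is a compact, $\Delta$-invariant subset. Compactness is immediate: $X_T$ is compact and $\varphi$ is continuous by Lemma~\ref{L:homeo}, so $\varphi(X_T)$ is compact, hence closed in the Hausdorff space $A^{\mathbb{Z}}$. Forward invariance follows from Theorem~\ref{theorem_moore}, since
$$\Delta(\varphi(X_T)) = \varphi \circ R_T \circ \varphi^{-1}(\varphi(X_T)) = \varphi(R_T(X_T)) \subseteq \varphi(X_T).$$

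Second, I would use that $\varphi : X_T \to \varphi(X_T)$ is a homeomorphism (Lemma~\ref{L:homeo}), and that, by Theorem~\ref{theorem_moore}, it satisfies the intertwining identity $\Delta|_{\varphi(X_T)} \circ \varphi = \varphi \circ R_T$. This means $\varphi$ is a topological conjugacy between the continuous systems $(R_T, X_T)$ and $(\Delta|_{\varphi(X_T)}, \varphi(X_T))$ on compact metric spaces, so topological entropy is preserved:
$$h(\Delta|_{\varphi(X_T)}) = h(R_T) = h(T),$$
where the last equality is Oprocha's formula~\eqref{eq.opr}.

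Third, I would invoke the standard fact that, for any continuous self-map of a compact metric space, restriction to a closed invariant subset cannot increase entropy; in Bowen--Dinaburg language this is clear since any $(n,\varepsilon)$-separated set in $\varphi(X_T)$ is $(n,\varepsilon)$-separated in $A^{\mathbb{Z}}$. Combining with the previous step,
$$h(\Delta) \geq h(\Delta|_{\varphi(X_T)}) = h(T),$$
which is the desired inequality. The ``in particular'' clause is then immediate. I do not foresee any real obstacle here: the only things to be careful about are the verification that $\varphi(X_T)$ is closed (so that the entropy monotonicity applies cleanly) and the observation that the conjugacy from Theorem~\ref{theorem_moore} is a full topological conjugacy, both of which are handled by Lemma~\ref{L:homeo}.
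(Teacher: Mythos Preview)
Your argument is correct and is essentially the same as the paper's: you use Theorem~\ref{theorem_moore} for forward invariance of $\varphi(X_T)$, Lemma~\ref{L:homeo} to upgrade $\varphi$ to a topological conjugacy between $R_T$ and $\Delta|_{\varphi(X_T)}$, and then monotonicity of entropy under restriction to a closed invariant subset. The only differences are cosmetic---you spell out compactness of $\varphi(X_T)$ and the separated-set justification for monotonicity, while the paper just cites \cite[Section 3.1.b]{KH}.
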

\begin{proof}
Since $\Delta\vert_{\varphi(X_T)} = \varphi \circ R_T \circ \varphi^{-1}\vert_{\varphi(X_T)}$, cf. Theorem~\ref{theorem_moore}, it is clear that the subset $\varphi(X_T)\subset A^{\mathbb{Z}}$ is forward invariant under the iterations of the generalized shift $\Delta$. Moreover, this property and Lemma~\ref{L:homeo} also show that the maps $R_T$ and $\Delta$ are topologically conjugate via the homeomorphism $\varphi:X_T\to \varphi(X_T)$. The invariance of the topological entropy under homeomorphisms and the fact that
\[
h(\Delta)\geq h(\Delta|_{\varphi(X_T)})\,,
\]
see e.g.~\cite[Section 3.1.b]{KH}, complete the proof of the proposition.
\end{proof}

%

\section{A criterion for positive topological entropy}\label{SS.crit}

In this section, we prove our first result, which shows that any Turing machine that belongs to a special class (that we call branching) has positive topological entropy. To this end, we exploit the fact that a dynamical system has positive topological entropy if it exhibits an invariant subset where the entropy is positive.

\subsection{Positive entropy for strongly branching Turing machines}
To illustrate the method of proof, we first consider the class of strongly branching Turing machines:
\begin{definition}[Strongly branching Turing machine]
A Turing machine $T$ is \emph{strongly branching} if for some $\varepsilon\in\{-1,1\}$ there exists a subset $Q'\times \Sigma' \subset (Q\setminus \{q_{halt}\})\times \Sigma$, with $|\Sigma'|\geq 2$, such that $\delta_Q(Q'\times \Sigma')\subseteq Q'$ and {$\delta_{\varepsilon}\vert_{Q'\times \Sigma'} \equiv \varepsilon$}. 
\end{definition}
Given a strongly branching Turing machine $T$ with $\varepsilon=1$ (the case $\varepsilon=-1$ is analogous), we claim that the subset
\begin{equation*}
	Y_T := \{x\in X_T : x_q\in Q', \ x_i \in \Sigma' \ \text{ for all } i\geq 0\} = Q'\times \Sigma^{\mathbb{N}_0}\times \Sigma'^{\mathbb{N}}\subset X_T
\end{equation*}
is forward invariant under the global transition function $R_T$. Here $\mathbb N_0$ is the set of natural numbers without $0$.
\begin{lemma}
$Y_T$ is forward invariant under $R_T$.
\end{lemma}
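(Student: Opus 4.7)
The plan is direct: take an arbitrary $x\in Y_T$ and verify that $R_T(x)$ still satisfies both defining conditions of $Y_T$, namely that its state lies in $Q'$ and that all tape entries at non-negative positions lie in $\Sigma'$. The key observation is that strong regularity with $\varepsilon=1$ forces every step starting from a pair $(q,t_0)\in Q'\times \Sigma'$ to be a left shift, so we have complete control over where symbols move.

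First I would fix $x=(q,t)\in Y_T$, so that $q\in Q'$ and $t_i\in\Sigma'$ for every $i\ge 0$; in particular $(q,t_0)\in Q'\times \Sigma'$. Two immediate consequences follow from the definition of strong regularity: (i) $q\neq q_{halt}$, because $Q'\subseteq Q\setminus\{q_{halt}\}$, so the nontrivial branch of $R_T$ in \eqref{turing_Def} applies rather than the halting extension \eqref{turing_Def2}; and (ii) $\delta_\varepsilon(q,t_0)=1$, so $R_T(x)$ is given by the first case of \eqref{turing_Def}, namely
\[
R_T(x)=\bigl(q',\,(\dots t_{-1}t_0'.t_1 t_2\dots)\bigr),\qquad q'=\delta_Q(q,t_0),\quad t_0'=\delta_\Sigma(q,t_0).
\]

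Next I would check the two conditions defining $Y_T$ for $R_T(x)$. The state condition is immediate: since $(q,t_0)\in Q'\times \Sigma'$ and $\delta_Q(Q'\times\Sigma')\subseteq Q'$, we get $q'\in Q'$. For the tape condition, I would read off the new tape entries from the formula above: for $i\ge 0$ the symbol at position $i$ of $R_T(x)$ is $t_{i+1}$ (the shift moved every entry originally at position $j\ge 1$ one cell to the left). Since $i+1\ge 1\ge 0$, the hypothesis $x\in Y_T$ gives $t_{i+1}\in\Sigma'$, as required. Combining both facts yields $R_T(x)\in Y_T$.

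There is no real obstacle here; the only delicate point is the indexing bookkeeping after the shift, specifically recognising that non-negative positions of the new tape depend only on symbols at positions $\ge 1$ of the old tape, which are all in $\Sigma'$ by hypothesis (so the fact that positions $i<0$ of $x$ are unrestricted causes no problem). The analogous case $\varepsilon=-1$ would be handled symmetrically with the set $\Sigma^{-\mathbb N}\times \Sigma'^{\mathbb N_0}$ replacing the description of $Y_T$, using now that the right shift sends position $i\le 0$ of the new tape to position $i-1$ of the old one.
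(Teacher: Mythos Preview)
Your proof is correct and follows exactly the same approach as the paper: take $x=(q,t)\in Y_T$, use strong regularity to conclude that $\delta_\varepsilon(q,t_0)=1$ and $\delta_Q(q,t_0)\in Q'$, then observe that after the left shift the symbol at position $i\ge 0$ of $R_T(x)$ is $t_{i+1}\in\Sigma'$. The paper's write-up is simply a more compressed version of your argument, introducing $s_i:=t_{i+1}$ for the shifted tape and omitting the explicit remark that $q\neq q_{halt}$.
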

\begin{proof}
Indeed, let $x=(q,(t_i))\in Y_T$. We have
\begin{equation*}
	R_T(x) = (q',(\dots t_{-1}t_0'.t_1\dots)) = (q',(\dots s_{-1}.s_0s_1\dots))
\end{equation*}
with $s_i:=t_{i+1}\in \Sigma'$ for all $i\geq 0$ and $q'=\delta_Q(q,t_0)\in Q'$ by hypothesis. Therefore, $R_T(x)\in Y_T$ as claimed.
\end{proof}

This lemma allows us to prove the following sufficient condition for positive topological entropy.
\begin{theorem}\label{pos_entropy_turing}
Let $T$ be a strongly branching Turing machine. Then
$$h(T)\geq \log |\Sigma'|>0\,.$$
\end{theorem}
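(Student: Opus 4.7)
My plan is to invoke Oprocha's formula \eqref{eq.opr} after showing that the allowed $n$-word set $S(n,R_T)$ has cardinality at least $|\Sigma'|^n$. I treat the case $\varepsilon=1$; the case $\varepsilon=-1$ is entirely symmetric, swapping the roles of positive and negative tape positions.

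First, fix any $q\in Q'$, and for each string $\tau=(\tau_0,\dots,\tau_{n-1})\in(\Sigma')^n$ construct $x^\tau\in X_T$ by setting $x^\tau_q=q$, $x^\tau_i=\tau_i$ for $0\le i\le n-1$, choosing $x^\tau_i\in\Sigma'$ arbitrarily for $i\ge n$, and choosing $x^\tau_i\in\Sigma$ arbitrarily for $i<0$. Then $x^\tau\in Y_T$ by construction, so the preceding lemma gives $R_T^j(x^\tau)\in Y_T$ for every $j\ge 0$; in particular the state component stays in $Q'\subset Q\setminus\{q_{halt}\}$ throughout the first $n$ steps, and the symbol read at position $0$ at each step is guaranteed to lie in $\Sigma'$ (so the hypothesis $\delta_\varepsilon|_{Q'\times\Sigma'}=1$ applies at every step).

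Because $\varepsilon=1$, each application of $R_T$ shifts the tape one place to the left, so an easy induction on $j$ gives $R_T^{j-1}(x^\tau)_0=\tau_{j-1}$ for $j=1,\dots,n$. Consequently the allowed word $u^\tau=(u^\tau_1,\dots,u^\tau_n)\in S(n,R_T)$ has second components exactly $(\tau_0,\dots,\tau_{n-1})$. Distinct choices of $\tau\in(\Sigma')^n$ therefore yield distinct elements $u^\tau\in S(n,R_T)$, which establishes
$$|S(n,R_T)|\geq |\Sigma'|^n.$$
Plugging this into Oprocha's formula \eqref{eq.opr} gives
$$h(T)=\lim_{n\to\infty}\frac{1}{n}\log|S(n,R_T)|\geq \log|\Sigma'|>0,$$
since $|\Sigma'|\geq 2$.

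There is no serious obstacle: the combinatorial content is simply that strong regularity forces the head to drift steadily in one direction while keeping the state confined to $Q'$, so the $|\Sigma'|^n$ free choices of symbols at positions $0,\dots,n-1$ produce $|\Sigma'|^n$ distinguishable length-$n$ orbit traces. The one mild technical point is verifying the inductive identity $R_T^{j-1}(x^\tau)_0=\tau_{j-1}$, which only depends on the symbols at positions $0,\dots,n-1$ and is immediate from the rigid left-shift behavior guaranteed by the hypothesis $\delta_\varepsilon|_{Q'\times\Sigma'}=1$.
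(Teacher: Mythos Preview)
Your proof is correct and follows essentially the same route as the paper: fix a starting state in $Q'$, prescribe the first $n$ tape symbols freely from $\Sigma'$, observe that the head drifts rigidly to the left through those symbols while the state stays in $Q'$, and conclude $|S(n,R_T)|\ge|\Sigma'|^n$ before invoking Oprocha's formula. The only cosmetic difference is that you explicitly place $x^\tau$ in $Y_T$ and appeal to the forward-invariance lemma, whereas the paper argues directly from $\delta_\varepsilon|_{Q'\times\Sigma'}=1$ using only the first $n$ tape cells; either way the computation is the same.
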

\begin{proof}
As before, let us consider that $T$ is strongly branching with $\varepsilon=1$, the other case being completely analogous. To estimate the topological entropy of $T$ we use Oprocha's formula in Equation~\eqref{eq.opr}. We claim that for each $n\geq 1$, $|S(n,R_T)|\geq |\Sigma'|^n$. To see this, fix $n$, let $q^0\in Q'$ and consider any finite sequence $\{a_0',a_1',\dots,a_{n-1}'\}\subset \Sigma'$. Choose any $x\in X_T$ such that $x_q = q^0$ and $x_i = a_i'$ for $i=0,\dots,n-1$. We define $q^i = R_T^i(x)_q$ for $i=1,\dots,n-1$ and finally we set $u=\left( (q^0,a'_0),\dots,(q^{n-1},a'_{n-1})\right)\in (Q'\times \Sigma')^n$. Since $\delta_\varepsilon\vert_{Q'\times \Sigma'}=1$, from~\eqref{turing_Def} we infer that
\begin{align*}
	(R_T^0(x)_q,R_T^0(x)_0)&=(q^0,t_0)=(q^0,a'_0)=u_0\,,\\
	(R_T^1(x)_q,R_T^1(x)_0)&=(q^1,t_1)=(q^1,a'_1)=u_1\,,\\
	&\vdots \\
	(R_T^{n-1}(x)_q,R_T^{n-1}(x)_0)&=(q^{n-1},t_{n-1})=(q^{n-1},a'_{n-1})=u_{n-1}\,,
\end{align*}
so $u\in S(n,R_T)$. Since this holds for all finite sequences of length $n$ in $\Sigma'$, we conclude that $\abs{S(n,R_T)}\geq \abs{\Sigma'}^n$. Hence
\begin{equation*}
	h(T) \geq \lim_{n\to\infty}\frac{1}{n}\log \abs{\Sigma'}^n = \log \abs{\Sigma'}\,,
\end{equation*}
as we wanted to prove.
\end{proof}

This theorem can be readily applied to show that some particular examples of universal Turing machines exhibit positive topological entropy. {In the following, we will refer to well-known examples of ``small" universal Turing machines. The notation used, which is standard, is $UTM(m,k)$ for a (concrete) universal Turing machine with $m$ states and $k$ alphabet symbols. Similarly, $URTM(m,k)$ denotes a reversible universal Turing machine, and $WUTM(m,k)$ a weakly universal Turing machine (we refer to \cite{Near} for a definition of weak universality).} For instance, the machine $T$ denoted as $UTM(6,4)$ in~\cite{Near} has a transition function $\delta$ specified by the following table. The horizontal axis contains the states and the vertical axis contains the symbols. Here, $L$ and $R$ stand for $\delta_\varepsilon=-1$ and $\delta_\varepsilon = 1$ in our notation.

\begin{center}
\begin{figure}[!h]
\begin{tabular}{ c|c c c c c c }
 $U_{6,4}$ & $u_1$ & $u_2$ & $u_3$ & $u_4$ & $u_5$ & $u_6$ \\
  \hline
 $g$ & $u_1bL$ & $u_1gR$ & $u_3bL$ & $u_2bR$ & $u_6bL$ & $u_4bL$ \\
 $b$ & $u_1gL$ & $u_2gR$ & $u_5bL$ & $ u_4gR $ & $u_6gR $ & $u_5 gR $  \\
 $\delta$ & $u_2cR$ & $u_2cR$ & $u_5\delta L$ & $u_4cR $ & $u_5\delta R$ & $u_1gR$ \\
 $c$ & $u_1\delta L$ & $u_5gR$ & $u_3\delta L$ & $u_5cR $ & $u_3 b L$ & halt
\end{tabular}
\caption{Transition table of $UTM(6,4)$.}
\end{figure}
\end{center}

It is clear that $Q'\times \Sigma' := \{u_2\}\times \{b,\delta\}$ satisfies the hypotheses in the definition of a strongly branching Turing machine with $\varepsilon=1$, and hence a straightforward application of Theorem~\ref{pos_entropy_turing} yields $h(T)\geq \log 2>0$, that is:
\begin{corollary}
The universal Turing machine $UTM(6,4)$ has positive topological entropy.
\end{corollary}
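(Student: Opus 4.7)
The plan is to apply Theorem~\ref{pos_entropy_turing} directly by exhibiting an explicit pair $Q' \times \Sigma' \subset (Q \setminus \{q_{halt}\}) \times \Sigma$ satisfying the three conditions in the definition of a strongly regular Turing machine: (i) $|\Sigma'|\geq 2$; (ii) $\delta_Q(Q'\times \Sigma')\subseteq Q'$; and (iii) $\delta_\varepsilon$ is constantly equal to some $\varepsilon\in\{-1,1\}$ on $Q'\times \Sigma'$. Since the paper has already reduced positive topological entropy to the combinatorial task of checking strong regularity, no further dynamical input is needed.

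First I would scan the transition table for $UTM(6,4)$ looking for a state $u$ that reappears as the next state for at least two distinct tape symbols, with the same head movement. Reading down the column headed by $u_2$, the entries $\delta(u_2,b)=(u_2,g,R)$ and $\delta(u_2,\delta)=(u_2,c,R)$ both return to the state $u_2$ and move right. This suggests the candidate
\[
Q' := \{u_2\}, \qquad \Sigma' := \{b,\delta\}, \qquad \varepsilon := 1.
\]
I would then verify the three conditions in turn. Condition (i) is immediate, since $|\Sigma'|=2$. For the non-halting requirement implicit in (ii), the only ``halt'' entry in the table sits at position $(c,u_6)$, so $q_{halt}=u_6$ and hence $u_2\neq q_{halt}$. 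The inclusion $\delta_Q(\{u_2\}\times\{b,\delta\})=\{u_2\}\subseteq Q'$ is read off directly from the two entries above, giving (ii). Condition (iii) is also immediate since both entries carry the direction $R$, yielding $\delta_\varepsilon\equiv 1$ on $Q'\times \Sigma'$.

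With strong regularity established, Theorem~\ref{pos_entropy_turing} delivers the estimate $h(T)\geq \log|\Sigma'|=\log 2>0$, which is the claim of the corollary. There is essentially no obstacle in this proof: the entire argument is mechanical table inspection once the right columns of the transition table have been singled out. The only subtlety worth emphasizing is that one must verify all three conditions simultaneously on the same pair $(Q',\Sigma')$; for example, several other columns of the table contain self-loops into a single state but with mixed left/right motions, which would fail condition (iii). The column at $u_2$ is the first one where two symbols yield a uniform direction, which is why it furnishes the witness.
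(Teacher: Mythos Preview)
Your argument is correct and is exactly the paper's own: the same witness $Q'\times\Sigma'=\{u_2\}\times\{b,\delta\}$ with $\varepsilon=1$ is exhibited, and Theorem~\ref{pos_entropy_turing} gives $h(T)\geq\log 2$. One small slip: the ``halt'' entry at $(u_6,c)$ means $\delta_Q(u_6,c)=q_{halt}$, not that $u_6=q_{halt}$; the correct justification that $u_2\neq q_{halt}$ is simply that $\delta$ is defined on the $u_2$-column, hence $u_2\in Q\setminus\{q_{halt}\}$.
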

The same argument works when considering, for example, the reversible universal Turing machine $URTM(10,8)$ as introduced in~\cite[Section 7.3.2]{Mor}:
\begin{corollary}
The universal Turing machine $URTM(10,8)$ has positive topological entropy.
\end{corollary}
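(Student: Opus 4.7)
The plan is to proceed exactly as in the preceding corollary for $UTM(6,4)$: inspect the explicit transition table of $URTM(10,8)$ given in~\cite[Section~7.3.2]{Mor} and exhibit a product subset $Q'\times\Sigma'\subseteq(Q\setminus\{q_{halt}\})\times\Sigma$ with $|\Sigma'|\geq 2$ on which $\delta_Q$ takes values in $Q'$ and $\delta_\varepsilon$ is identically equal to some fixed $\varepsilon\in\{-1,+1\}$. Once such a block is identified, the machine is strongly regular by definition and Theorem~\ref{pos_entropy_turing} delivers the bound $h(URTM(10,8))\geq\log|\Sigma'|>0$.

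The concrete search I would run first is for a singleton $Q'=\{q^\star\}$: scan each of the ten non-halting states, and for each one list those symbols $a\in\Sigma$ with $\delta_Q(q^\star,a)=q^\star$, grouping them according to the value of $\delta_\varepsilon(q^\star,a)$. Whenever one such group has at least two elements, the pair $(\{q^\star\},\Sigma')$ certifies strong regularity, exactly as $(\{u_2\},\{b,\delta\})$ did for $UTM(6,4)$. Should no singleton succeed, the natural fallback is to enlarge $Q'$ iteratively: start from some candidate $(Q',\Sigma')$, close $Q'$ under $\delta_Q(\,\cdot\,,\Sigma')$, then discard from $\Sigma'$ any symbol whose $\delta_\varepsilon$ value disagrees with the intended $\varepsilon$, repeating until a closed block with $|\Sigma'|\geq 2$ is reached or the process collapses. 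Since $|Q|\leq 10$ and $|\Sigma|\leq 8$, this is a very finite search.

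The one feature that distinguishes $URTM(10,8)$ from the previous example is reversibility, which forces the partial transition relation to be injective. This does not obstruct the argument, because strong regularity demands only that $\delta_Q$ and $\delta_\varepsilon$ be constant in the appropriate sense on $Q'\times\Sigma'$, with no restriction on the image symbol $\delta_\Sigma$; in particular, distinct $a\in\Sigma'$ may (and by injectivity must) be written to as distinct output symbols, which is entirely compatible with the hypotheses. Hence the only real obstacle is the bookkeeping of locating the block in the transition table; once exhibited, the corollary follows at once from Theorem~\ref{pos_entropy_turing}.
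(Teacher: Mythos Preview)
Your proposal is correct and matches the paper's approach exactly: the paper simply asserts that ``the same argument works'' for $URTM(10,8)$, i.e., one inspects the transition table in \cite[Section~7.3.2]{Mor}, identifies a strongly regular block $Q'\times\Sigma'$, and invokes Theorem~\ref{pos_entropy_turing}. The paper, like your proposal, does not spell out the specific $Q'$ and $\Sigma'$; your outline of the finite search (singleton $Q'$ first, then enlarging) and your remark that reversibility poses no obstruction are entirely in the intended spirit.
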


Other examples of universal Turing machines that are strongly branching are $UTM(5,5), UTM(4,6)$ and $UTM (10,3)$ in~\cite{Rog}, or $UTM(5,5), UTM(9,3)$ and $UTM(6,4)$ in \cite{Near}. The weakly universal Turing machines $WUTM(3,3)$ and $WUTM(2,4)$ in~\cite{WN}, or the famous Wolfram's weakly universal $(2,3)$ Turing machine \cite{Sm} are also strongly branching. We will later show that the universal Turing machine $UTM(15,2)$ in \cite{Near} or the weakly universal Turing machine $(6,2)$ in~\cite{WN} are not strongly branching but are branching according to the definition in the following subsection (so, in particular, they have positive topological entropy).

\subsection{A generalized criterion for branching Turing machines}

In this subsection we establish a more general version of Theorem~\ref{pos_entropy_turing} that is also computable and implies that the Turing machine has positive topological entropy.

For this criterion we need to introduce some notation. As before, we denote tapes in $\Sigma^{\mathbb{Z}}$ using $t$ and $t_i$ denotes the $i^{\text{th}}$ symbol of $t$. We will construct a computable function
$$\phi: Q\times \Sigma \longrightarrow \{H,P\} \sqcup \{\pm 1\}\times Q,$$
which tells us whether the machine, with current state $q$ and reading the symbol $s$ at the zero position, will eventually (after perhaps some steps without shifting) shift to the right, to the left, or not shift at all before halting (H) or becoming periodic (P). More precisely, given a pair $(q,s)\in Q\times \Sigma$, if $\delta_\varepsilon((q,s))=\pm 1$ and $\delta_Q((q,s))\neq q_{halt}$, then
$$\phi(q,s):=(\delta_{\varepsilon}(q,s), \delta_Q(q,s))\,.$$
If $\delta_Q(q,s)=q_{halt}$ then
$$\phi(q,s):=H\,.$$
Otherwise, setting $\delta_Q(q,s)=q_1$ and $\delta_\Sigma(q,s)=s_1$, if $\delta_Q(q_1,s_1)=q_{halt}$, then we define
$$\phi(q,s):=H\,,$$
and we iterate this process. It is easy to check that for any pair $(q,s)$, only the following possibilities can occur for the aforementioned iteration:

\begin{enumerate}
 \item After $k$ steps of the machine without any shifting, we reach a configuration $(\tilde q, \tilde t)$ such that $\delta_Q(\tilde q, \tilde t_0)=q_{halt}$. In this case $\phi(q,s):=H$.
 \item The iterates of the global transition function applied to $(q,t)$ never shift nor reach a halting configuration. Then for some $k$ we have $R_T^k(q,t)=(q,t)$ and the orbit becomes periodic. We define in this case $\phi(q,s):=P$.
 \item After $k$ steps without any shift, the machine shifts to the left without halting. That is, a configuration of the form $(q_+,\tilde t)$ with $\delta(q_+,\tilde t_0)=(q',s',1)$, $q'\neq q_{halt}$, is reached. In this case, we define $\phi(q,s):=(1, q')$.
 \item After $k$ steps without shifting, the machine shifts to the right without halting. That is, a configuration of the form $(q_-,\tilde t)$ with  $\delta(q_-,\tilde t_0)=(q',s',-1)$ and $q' \neq q_{halt}$ is reached. In this case, we define $\phi(q,s):=(-1, q')$.
\end{enumerate}
Of course, the integer $k$ depends on the pair $(q,s)$, and an upper bound can be obtained in terms of the number of states and alphabet symbols. We can define the function $\tau: Q\times \Sigma \rightarrow \mathbb{Z}$ as the function giving such an integer $k$.

\begin{definition}[Branching Turing machine]\label{def:regular}
A Turing machine $T$ is \emph{branching} if, for some $\varepsilon\in \{-1,1\}$, there exist two different sequences 
\[
(q_1,s_1),...,(q_{m_1},s_{m_1}) \text{ and } (q_1',s_1'),...,(q_{m_2}',s_{m_2}')
\]
of pairs in $Q\backslash\{q_{halt}\}\times \Sigma$, with $m_1\geq 2$, $m_2\geq 2$, such that $q_1=q_1'$, $\phi(q_i,s_i)=(\varepsilon,q_{i+1})$, $\phi(q_j',s_j')=(\varepsilon,q_{j+1}')$ for all $1\leq i\leq m_1-1$, $1\leq j\leq m_2-1$, and $\phi(q_{m_1},s_{m_1})=\phi(q_{m_2}',s_{m_2}')=(\varepsilon,q_1)$. We also require that none of the sequences is a concatenation of copies of the other sequence.
\end{definition}

\begin{remark}
{The definition of branching Turing machine becomes simpler to state if one works with Turing machines such that $\varepsilon\in \{-1,1\}$. We have stuck to the model with $\varepsilon\in \{-1,0,1\}$ to be consistent with previous works, e.g. \cite{CMPP2}.}
\end{remark}
\paragraph{\textbf{Graph interpretation}.} The branching of a Turing machine can be easily understood in terms of two graphs that we can associate to $T$ using the function $\phi$. These graphs are different, although somewhat related, from the classical state diagram of the transition function of a Turing machine, see e.g. \cite[Section 3.1]{Sip}.
For each $\varepsilon\in\{-1,1\}$ we can associate to $T$ a graph as follows: the vertices of the graph are the set of states of the machine. Given two vertices $q$ and $q'$, we define an edge oriented from $q$ to $q'$ for each $s\in\Sigma$ such that $\phi(q,s)=(\varepsilon,q')$. It is then obvious from the definition, that a Turing machine is branching if and only if for some $\varepsilon$ the corresponding graph contains two different oriented cycles with at least one common vertex.

The following examples show that there are universal Turing machines that are branching, but not strongly branching. The first example also illustrates the graph interpretation of a branching Turing machine.

\begin{ex}
An example of a (weakly) universal Turing machine that is branching but not strongly branching is given by the $(6,2)$ machine in \cite{WN}. As before, the notation $WUTM(m,k)$ means that the Turing machine consists of $m$ states and $k$ alphabet symbols.
\begin{center}
\begin{figure}[!h]
\begin{tabular}{ c|c c c c c c }
 $WU_{6,2}$ & $u_1$ & $u_2$ & $u_3$ & $u_4$ & $u_5$ & $u_6$ \\
  \hline
 $g$ & $u_10L$ & $u_60L$ & $u_20R$ & $u_51R$ & $u_41L$ & $u_11L$ \\
 $b$ & $u_21L$ & $u_30L$ & $u_31L$ & $ u_60R $ & $u_41R $ & $u_4 0R $
\end{tabular}
\caption{Transition table of $WUTM(6,2)$.}
\end{figure}
\end{center}

It is easy to check that it is not strongly branching. On the other hand, the sequences $(4,g), (5,b), (4,g)$ and $(4,b), (6,g), (4,b)$ satisfy the required properties for the machine to be branching. Figure \ref{fig:graph} pictures the graph (as defined before) of the machine for $\varepsilon=1$. Notice how $u_4$ belongs to two different cycles.
\begin{figure}[!h]
\begin{tikzpicture}

\tikzset{vertex/.style = {shape=circle,draw,minimum size=1.5em}}
\tikzset{edge/.style = {->,> = latex'}}
\node[vertex] (u1) at  (0,0) {$u_1$};
\node[vertex] (u2) at  (2,2) {$u_2$};
\node[vertex] (u3) at  (4,2) {$u_3$};
\node[vertex] (u4) at  (6,0) {$u_4$};
\node[vertex] (u5) at (4,-2) {$u_5$};
\node[vertex] (u6) at (2,-2) {$u_6$};

\draw[edge] (u3) to (u2);
\draw[edge] (u4) to (u5);
\draw[edge] (u4) to (u6);
\draw[edge] (u5) to[bend right] (u4);
\draw[edge] (u6) to[bend left] (u4);

%
%
%
%
\end{tikzpicture}
\caption{Graph of $WU_{6,2}$ for $\varepsilon=+1$.}
\label{fig:graph}
\end{figure}
\end{ex}

\begin{ex}
The universal Turing machine $UTM(15,2)$ in \cite{Near} is another example of universal Turing machine that is not strongly branching, but it is branching. Looking at the transition table~\cite[Table 16]{Near}, one notices that the sequences $(u_4,c), (u_6,c), (u_4,c)$ and $(u_4,c), (u_6,b), (u_4,c)$ satisfy the necessary conditions in Definition \ref{def:regular}.
\end{ex}

As suggested by the name, any strongly branching Turing machine is branching. The graph interpretation is crucial to prove this property.

\begin{proposition}
A strongly branching Turing machine is branching.
\end{proposition}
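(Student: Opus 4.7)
My plan is to exploit the graph interpretation of regularity discussed just before the statement. First, I would observe that if $T$ is strongly regular with parameter $\varepsilon$ and witnessing subset $Q' \times \Sigma'$, then for every pair $(q, s) \in Q' \times \Sigma'$ the definition of $\phi(q, s)$ terminates at its very first case: since $\delta_\varepsilon(q, s) = \varepsilon \in \{-1, 1\}$ and $\delta_Q(q, s) \in Q' \subseteq Q \setminus \{q_{halt}\}$, neither the halting branch nor the non-shifting iteration branch is entered, so $\phi(q, s) = (\varepsilon, \delta_Q(q, s))$. Consequently, the graph $G_\varepsilon$ attached to $T$ (as defined in the graph interpretation paragraph) has the following property when restricted to the vertex set $Q'$: it is a finite directed multigraph whose edges all stay inside $Q'$ and in which every vertex has out-degree at least $|\Sigma'| \geq 2$, with one edge per symbol of $\Sigma'$.

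The remainder is a purely graph-theoretic claim: any such multigraph contains two distinct oriented cycles of length at least two sharing a common vertex. To extract them, I would pass to the condensation of $G_\varepsilon|_{Q'}$ into strongly connected components and pick any sink component $S \subseteq Q'$. Being a sink means no edge of $G_\varepsilon|_{Q'}$ leaves $S$, so every vertex of $S$ still has out-degree at least two inside $S$. If $|S| = 1$, say $S = \{v\}$, then $v$ carries at least two self-loops via distinct symbols $s_1 \neq s_2$, and the length-two closed walks $(v, s_1), (v, s_1)$ and $(v, s_1), (v, s_2)$ provide two distinct cycles sharing the vertex $v$. If $|S| \geq 2$, I pick some $v \in S$ together with two outgoing edges from $v$ inside $S$, labelled by distinct symbols $s_1, s_2 \in \Sigma'$ and ending at $w_1, w_2 \in S$ (possibly with $w_1 = w_2$); strong connectivity then supplies directed paths from $w_1$ and from $w_2$ back to $v$ inside $S$, and prepending the edges $(v, s_1)$ and $(v, s_2)$ respectively produces two cycles through $v$ that already differ in their first pair.

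Finally, I would translate these two closed walks into the sequences of pairs demanded by Definition~\ref{def:regular}: both are based at the common vertex $q_1 = q_1' = v$, both have length at least two, and the identity $\phi = (\varepsilon, \cdot)$ along every step matches the conditions imposed on the sequences for the same $\varepsilon$ given by strong regularity. The step I expect to be most delicate is the degenerate $|S| = 1$ case, where one has to be mildly creative to meet the length $m_i \geq 2$ requirement by traversing a single self-loop twice; everything else reduces to elementary strong-connectivity bookkeeping inside $Q'$.
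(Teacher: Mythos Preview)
Your argument is correct and reaches the same graph-theoretic reduction as the paper---on $Q'\times\Sigma'$ one has $\phi(q,s)=(\varepsilon,\delta_Q(q,s))$, so the induced subgraph on $Q'$ has minimum out-degree $|\Sigma'|\ge 2$ with all edges staying in $Q'$---but the extraction of the two cycles is done differently. You pass to the condensation, pick a sink strongly connected component $S$, and use strong connectivity together with two distinct outgoing edges at a base vertex $v$ to build two closed walks through $v$ differing at the first pair. The paper instead runs a greedy procedure: walk until a vertex repeats, extract that cycle and delete its edges, restart from a vertex not yet covered, and continue; once the vertex set is exhausted by pairwise vertex-disjoint cycles $C_1,\dots,C_N$, the remaining graph still has minimum out-degree $\ge 1$, so one further cycle $C_0$ can be extracted and it necessarily meets some $C_i$. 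Your approach is more structural, the paper's more hands-on; both are short and elementary. One small point: in your $|S|\ge 2$ case, if one of the first edges $(v,s_i)$ happens to be a self-loop then the resulting closed walk has length $1$, so you should either traverse it twice (exactly as you do for $|S|=1$) or insist on a non-trivial return path inside $S$, which exists since $|S|\ge 2$. The paper's proof has the same implicit issue with the requirement $m_i\ge 2$ and does not comment on it.
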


\begin{proof}
Let $T$ be a strongly branching Turing machine with $\varepsilon=1$ (the other case is analogous), and let $Q'\subset Q\backslash\{q_{halt}\}$ and $\Sigma'\subset \Sigma$ be the subsets such that $\delta_{\varepsilon}|_{Q'\times \Sigma'}=1$ and $\delta_Q(Q'\times \Sigma')\subseteq Q'$ with $|\Sigma'|\geq 2$. Consider the associated graph defined above (with $\varepsilon=1$) and the subgraph $G$ given by the vertices $Q'$. It is clear that each vertex of $G$ is the origin of at least two edges, since any pair $(q,s)\in Q'\times\Sigma'$ has $\delta_{\varepsilon}(q,s)=1$ and $|\Sigma'|\geq 2$. Starting with a vertex $q_1\in Q'$ of $G$, we can iteratively move along an edge (without ever repeating that edge), following a sequence of vertices $q_i\in Q'$ and stop whenever we reach a $k$ such that $q_k=q_j$ for some $j<k$. This will necessarily happen, since after we have moved $|Q'|-1$ times, we will repeat a vertex.  This way we find a cycle $C_1$, and we denote its set of vertices by $V_1$.

Consider the graph $G_1$ obtained by removing from $G$ the edges of $C_1$, and take a vertex $q\in Q'\setminus V_1$. Again, iteratively move along the edges of the graph starting at $q$. Notice that each vertex in $V_1$ is the origin of some edge, hence after at most $|Q|-1$ steps we will find another cycle $C_2$ with vertices $V_2$. If $V_1\cap V_2\neq \emptyset$, we are done. Otherwise $V_1$ and $V_2$ are disjoint, and we consider the graph $G_2$ obtained by removing from $G_1$ the edges of the cycle $C_2$. Repeating this process, if we do not find two cycles sharing a vertex, we end up with disjoint cycles $C_1,...,C_N$ containing every vertex of $G$. If we remove all the edges of the cycles $C_1,...,C_N$ from $G$, we obtain a graph $G'$ such that every vertex is the origin of at least one edge. We can apply the argument once more to $G'$, finding another cycle $C_0$, which necessarily intersects one of the $C_1,...,C_N$, which completes the proof of the proposition.
\end{proof}

Finally, we are ready to prove Theorem~\ref{T.main}.

\begin{theorem}\label{thm:regular}
A branching Turing machine has positive topological entropy.
\end{theorem}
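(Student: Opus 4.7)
The plan is to generalize the strongly regular argument by ``collapsing'' the non-shifting Turing steps via the $\phi$-function: each application of $\phi(q,s)=(\varepsilon,q')$ in a cycle costs $\tau(q,s)+1$ Turing steps but only modifies the symbol at the current head position before shifting by $\varepsilon$. I would exploit this to build block-structured initial configurations whose $n$-words grow exponentially in $n$, and then conclude via Oprocha's formula~\eqref{eq.opr}.

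First, using the graph interpretation of Definition~\ref{def:regular}, I would reduce to the following normal form: two distinct simple cycles in the $\phi$-graph that share a common starting vertex $q_*$ \emph{and} leave $q_*$ along different edges. This is essentially automatic: if at every common vertex the two cycles used the same outgoing edge, then following either cycle from such a vertex would produce identical walks, contradicting the assumption that the cycles are distinct. After cyclic reindexing, the two sequences of Definition~\ref{def:regular} begin with $(q_*,s_1)$ and $(q_*,s_1')$ satisfying $s_1\neq s_1'$.

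Second, for each $\omega\in\{1,2\}^N$ I would build an initial configuration $x^\omega=(q_*,t^\omega)$ whose tape is obtained by concatenating, at positions $0,1,2,\dots$ (assuming $\varepsilon=+1$; the opposite case is symmetric), the symbol sequences of cycles $\omega_1,\omega_2,\ldots,\omega_N$, padded with blanks elsewhere. Let $L_i$ be the number of Turing steps needed to execute one full iteration of cycle $i$, i.e.\ the sum of $\tau(q_j,s_j)+1$ over its $m_i$ constituent $\phi$-transitions, and set $L:=\max(L_1,L_2)$. Because each $\phi$-transition overwrites only the symbol at the current head position before shifting, distinct blocks never interfere with one another: the machine executes block $i$ in exactly $L_{\omega_i}$ Turing steps, returning to state $q_*$ at the tape position where block $i+1$ begins. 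After $N$ blocks, at most $NL$ Turing steps have elapsed.

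Finally, for $\omega\neq\omega'$ with first disagreement at index $k$, both executions coincide up to Turing step $T_k=\sum_{j<k}L_{\omega_j}=\sum_{j<k}L_{\omega_j'}$; at this step both machines are in state $q_*$, but the currently read symbols are $s_1$ and $s_1'$ respectively, which differ by the normal form. Hence the associated $n$-words are distinct whenever $n\geq NL$. Taking $N:=\lfloor n/L\rfloor$ yields $|S(n,R_T)|\geq 2^{\lfloor n/L\rfloor}$, and Oprocha's formula gives
\[
h(T)\geq \frac{\log 2}{L}>0.
\]
The main obstacle I expect is the normal-form reduction: one must verify carefully that two distinct cycles sharing a vertex can always be reindexed so that they start at a common vertex with distinct outgoing edges, and that the subsequent block-by-block execution is indeed clean (in particular, that a cycle's rewrites do not bleed into future blocks). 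Everything else is bookkeeping once the $\phi$-abstraction is in place.
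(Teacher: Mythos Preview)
Your proposal is correct and follows essentially the same route as the paper: build block-structured tapes by concatenating the symbol sequences of the two $\phi$-cycles, observe that the machine traverses each block in a bounded number of steps and returns to state $q_1$, and conclude via Oprocha's formula that $h(T)\geq (\log 2)/L$ for the appropriate constant $L$. The paper's proof simply lists the corresponding $(q,s)$-words $u_1,u_2$ explicitly and asserts that distinct concatenations $v_1\oplus\cdots\oplus v_r$ yield distinct elements of $S(n,R_T)$, arriving at the same bound.

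The one place you go a bit further than the paper is the normal-form reduction (reindexing so that the two cycles leave the common vertex $q_*$ along different edges, i.e.\ $s_1\neq s_1'$). The paper does not isolate this step; it relies implicitly on $u_1\neq u_2$ to distinguish the concatenated words. Your reduction makes the injectivity of $\omega\mapsto(\text{$n$-word})$ completely explicit and also disposes of the degenerate situation in which one listed ``cycle'' is merely a power of the other (a case that the paper's literal Definition~\ref{def:regular} technically admits but which the graph interpretation rules out). So your extra care is well placed, and the remaining verification you flag---that rewrites during one block never touch cells belonging to later blocks---is exactly the observation that each $\phi$-transition writes only at the current head position before shifting by $\varepsilon$, which is how $\phi$ was constructed.
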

\begin{proof}
Let $T$ be a branching Turing machine, and let us assume that $\varepsilon=1$, the other case being analogous.
Define the integers
$$a_1:= 1+\sum_{i=1}^{m_1} \tau(q_i,s_i) , \quad a_2:= 1+ \sum_{i=1}^{m_2} \tau(q_i',s_i'), $$
and assume without any loss of generality that $a:=a_1\geq a_2$. Obviously, $a_1> m_1$ and $a_2> m_2$, so $a> \max\{m_1,m_2\}$. Consider integers of the form $n=ra$ with $r\in \mathbb{N}_0$. We claim that
\begin{equation}\label{eq:exp}
|S(n, R_T)| \geq 2^r.
\end{equation}
It is then easy to check that the topological entropy of $T$ is positive. Indeed, using Oprocha's formula we have:
\begin{align*}
h(T)&=\lim_{n\rightarrow \infty} \frac{1}{n}\log|S(n,R_T)| \\
&=\lim_{r\rightarrow \infty} \frac{1}{n}\log|S(n,R_T)| \\
&\geq \lim_{r\rightarrow \infty} \frac{1}{ra}r\log 2\\
&=\frac{\log 2}{a}>0\,.
\end{align*}
To see that the estimate~\eqref{eq:exp} holds, we need to define some sequences of pairs in $Q\times \Sigma$. For each $(q_i,s_i)$, we define $(q_{i,1},s_{i,1})=\big(\delta_Q(q_i,s_i),\delta_\Sigma(q_i,s_i)\big)$ and then iteratively
$$(q_{i,j},s_{i,j})= \big(\delta_Q(q_{i,{j-1}},s_{i,{j-1}}\big), \delta_\Sigma(q_{i,{j-1}},s_{i,{j-1}})), \enspace \text{for } j\in \{2,...,\tau(q_i,s_i)-1\}.$$
We define analogously $(q_{i,j}',s_{i,j}')$. Consider the sequences
\begin{multline*}
u_1= \Big( \big(q_1,s_1\big),\big(q_{1,1},s_{1,1}\big), \big(q_{1,2},s_{1,2}\big),...,\big(q_{1,\tau(q_1,s_1)-1},s_{1,\tau(q_1,s_1){-1}}\big),
\big(q_2,s_2\big),\\ \big(q_{2,1},s_{2,1}\big), ..., \big(q_{m_1-1,\tau(q_{m_1-1},s_{m_1-1})-1},{s_{m_1-1,\tau(q_{m_1-1},s_{m_1-1})-1}}\big), \big(q_{m_1},s_{m_1}\big)\Big),
\end{multline*}
\begin{multline*}
u_2= \Big( \big(q_1',s_1'\big),\big(q_{1,1}',s_{1,1}'\big), \big(q_{1,2}',s_{1,2}'\big),...,\big(q_{1,\tau(q'_1,s'_1)-1}',s_{1,\tau(q'_1,s'_1)-1}' \big),\big(q_2',s_2'\big),\\ \big(q_{2,1}',s_{2,1}'\big), ..., \big(q_{m_1-1,\tau(q'_{m_1-1},s'_{m_1-1})-1}', s_{m_1-1,\tau(q'_{m_1-1},s'_{m_1-1})-1}'\big), \big(q_{m_1}',s_{m_1}'\big)\Big).
\end{multline*}
and any sequence of the form
$$u= v_1 \oplus v_2 \oplus ... \oplus  v_r, $$
where $v_i$ is equal to either $u_1$ or $u_2$. This sequence has size at most $ra$, and there are $2^r$ possible choices which are all different thanks to the property that the two sequences of pairs $(q_i,s_i)$ and $(q_i',s_i')$ in $Q\times \Sigma$ satisfy that each one is not a concatenation of copies of the other one. For each possible $u$, consider the initial tape
$$t_u=...00.t_1t_2...t_{ra}00... $$
constructed as follows. If $v_1=u_1$, then the first $m_1$ symbols of the tape are $s_1,...,s_{m_1}$. If $v_1=u_2$, then instead the first $m_2$ symbols of the tape are $s_1',...,s_{m_2}'$. The next group of symbols is determined by $v_2$, if $v_2=u_1$ then the next symbols are $s_1,...,s_{m_1}$, and if $v_2=u_2$ then the next symbols are $s_1',...,s_{m_2}'$. We do this up to $v_r$, and this determines at most $ra$ symbols. We can fill the rest of symbols up to $t_{ra}$ with zeroes, for instance. By construction, initializing the machine with the configuration $x=(q_1,t_u)$, it is easy to check that
$(R_T^i(x)_q, R_T^i(x)_0)$
follows sequentially the pairs in $u$, and after that it possibly runs through some other pairs in $Q\times \Sigma$. This shows that for each $u$ there is (at least) a distinct element in $S(n,R_T)$, which proves that the bound~\eqref{eq:exp} holds, as we wanted to show.
\end{proof}

Notice that, using the graph interpretation of a branching Turing machine, the proof of Theorem~\ref{thm:regular} yields that the number of paths stemming from the vertex that belongs to two different cycles grows exponentially with respect to the length of the path. Heuristically, this might be interpreted as a hint that the topological entropy is positive, as rigorously established in Theorem~\ref{thm:regular}.

We believe that the hypothesis that implies positive topological entropy, in our case, the definition of branching Turing machine, can probably be relaxed a bit at the cost of adding quite some more technical details and definitions. We have not found any example in the literature of a universal Turing machine that is not branching, although examples can certainly be constructed {artificially}, as explained in Section~\ref{appendix}.

\section{Topological entropy of Turing complete area-preserving diffeomorphisms and Euler flows}\label{S.app}

Our previous criterion implies {that, under continuous encodings as in \cite{CMPP2}, Turing complete dynamics obtained from simulating branching universal machines ``inherit" positive topological entropy from their Turing complete behavior. To apply in the case of the 3D Euler flows constructed in \cite{CMPP2},} we first recall the definition of a Turing complete dynamical system $X$ on a topological space $M$:

\begin{definition}[Turing complete dynamical system]
\label{def:TCDS}
A dynamical system $X$ on $M$ is \emph{Turing complete} if there is a universal Turing machine $T_u$ such that for any input $t_{in}$ of $T_u$, there is a computable point $p\in M$ and a computable open set $U\subset M$ such that $\operatorname{Orb}_X(p)\cap U\neq \emptyset$ if and only if the machine $T_u$ halts with input~$t_{in}$.
\end{definition}


\begin{remark}
{Although it is not explicitly stated in the definition, the computability of $p$ and $U$ in this context should not be understood in the sense of computable analysis. Instead, it can be defined by saying that there is a Turing machine that outputs the exact coordinates of $p$ in some coordinate system in finite time. 
Similarly, the open set $U$ should have a finite description. A way of making this precise, for example, when $M$ is a manifold, is as follows: for some fixed atlas of $M$, we require $p$ to be a rational point in some chart, and require $U$ to be a finite union of balls with rational centers and radii. Variations of definitions of computability involve allowing the use of $\operatorname{BSS}_{\operatorname{C}}$ machines, see \cite{CR}.} 

{
In addition, for the simulation to occur within the dynamics and not within the maps computing $p$ and $U$, one requires that the maps assigning to an input of the machine the corresponding point and open set should be ``reasonable". For instance, one can require that the coordinates of $p$ and the balls describing $U$ assigned to an input of the machine can be computed in a time that only depends on the size of the input of the machine. We refer to \cite{CR} for a formalization of this condition. In any case, this (and more, see the next paragraph) is satisfied by most constructions of Turing complete systems in the literature (including the ones we allude to in this section), as there the maps giving $p$ and $U$ are usually given in an explicit closed form. }
\end{remark}
{Let us also mention that, even though Definition~\ref{def:TCDS} is one of the most general definitions of Turing completeness, in practice, when constructing Turing complete systems, they often satisfy stronger simulation properties like some kind of simulation ``step by step''.} These simulation properties can be used to give stronger definitions of Turing completeness.
An example of such a property could be, for example, for a diffeomorphism $f:M\rightarrow M$ of a manifold, to require that there is a compact invariant subset $C\subset M$ such that $f|_C$ is conjugate or semiconjugate to the global transition function of the Turing machine $T_u$. In this direction, the following lemma follows from a combination of~\cite[Lemma 4.5]{CMPP2} and~\cite[Proposition 5.1]{CMPP2}:

\begin{lemma}
Let $T$ be a reversible Turing machine whose global transition function has been extended to halting configurations via the extension of the transition function $\delta(q_{halt},t)=(q_0,t,0)$ for all $t\in \Sigma$. Then there exists a bijective generalized shift $\Delta$ that is conjugate to $R_T$, and a smooth area-preserving diffeomorphism $\varphi:D\longrightarrow D$ of a disk $D$ (of radius larger than one) that is the identity near the boundary and whose restriction to the square Cantor set $C^2\subset D$ is conjugate to $\Delta$ by the homeomorphism $e$ in Equation~\eqref{eq:coding}.
\end{lemma}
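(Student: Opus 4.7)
The plan is to prove the lemma in two stages, matching the two assertions: first obtain a bijective generalized shift $\Delta$ that models $R_T$, then realize $\Delta$ as a smooth area-preserving disk diffeomorphism via the Cantor encoding $e$ of Remark~\ref{R.Cantor}.

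For the first stage, I would begin with Moore's Theorem~\ref{theorem_moore}, which already yields a generalized shift $\Delta$ on $A^{\mathbb Z}$ whose restriction to $\varphi(X_T)$ is conjugate to $R_T$. The point of the two hypotheses is to make $\Delta$ globally bijective: the reversibility of $T$ means every non-halting configuration has a unique predecessor, and the specific extension $\delta(q_{halt},t)=(q_0,t,0)$ forces halting configurations to feed back into the initial state in a consistent one-to-one way, so that $R_T:X_T\to X_T$ is a bijection. Consequently $\Delta|_{\varphi(X_T)}$ is a bijection of $\varphi(X_T)$. The remaining task is to extend $\Delta$ to a bijection of $A^{\mathbb Z}$: since a generalized shift only depends on a finite window $[-r,r]$, one is free to redefine the local rules $F$ and $G$ on the cylinders that do not intersect $\varphi(X_T)$ (i.e., cylinders with no state symbol at the origin, or with two state symbols, etc.); by choosing these rules to form a permutation of the unused cylinders, one obtains a globally bijective $\Delta$ that still satisfies the conjugation to $R_T$ on $\varphi(X_T)$.

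For the second stage, I would transport $\Delta$ via $e$ to a self-homeomorphism $\Delta_e=e\circ \Delta\circ e^{-1}$ of the Cantor square $C^2$. Because $\Delta$ is determined by a finite window of size $2r+1$, the map $\Delta_e$ acts by permuting the basic squares of side $3^{-(2r+1)}$ that cover $C^2$, and each permutation piece is affine with Jacobian $\pm 1$. Extending these affine rules from the Cantor pieces to their ambient open squares in $[0,1]^2$ produces a piecewise-affine area-preserving map on a union of rectangles. Finally, following the Moore--Koiran strategy, I would decompose each affine piece into a composition of horizontal and vertical shears, smoothly interpolate them through compactly supported Hamiltonian isotopies (so each step is automatically area-preserving), and embed $[0,1]^2$ into a disk $D$ of radius greater than $1$ by letting the isotopy be supported in a slightly larger neighborhood, so the resulting diffeomorphism $\varphi:D\to D$ is the identity near $\partial D$. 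By construction $\varphi|_{C^2}=e\circ\Delta\circ e^{-1}$.

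The hardest step is the simultaneous achievement of $C^\infty$ smoothness, exact area preservation, and the identity-near-boundary condition in the second stage: one must interpolate between finitely many affine pieces on disjoint rectangles inside the square and the identity on the complementary region, without destroying the symplectic form. Using generating functions (or equivalently, Hamiltonian flows with cutoff Hamiltonians) handles this, but checking that the resulting smooth map still agrees with the prescribed affine action on all cylinders of $C^2$ requires the cutoffs to be chosen so that they are identically zero on the active Cantor atoms. The content of the lemma is essentially this technical construction, which is precisely what is provided by combining \cite[Lemma~4.5]{CMPP2} (the bijective $\Delta$) with \cite[Proposition~5.1]{CMPP2} (the smoothing and disk extension), and invoking these results completes the proof.
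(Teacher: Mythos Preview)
Your proposal is correct and follows exactly the approach the paper takes: the paper does not give an independent proof but simply states that the lemma follows from combining \cite[Lemma~4.5]{CMPP2} (the bijective generalized shift) with \cite[Proposition~5.1]{CMPP2} (the area-preserving smoothing on the disk), and your sketch is a faithful expansion of what those two results establish. One minor caveat: the action of $\Delta_e$ on the Cantor pieces is not merely a permutation of equal-sized squares, since the shift $\sigma^{F(\cdot)}$ contributes an anisotropic scaling; this is handled in \cite{CMPP2} but your description of ``Jacobian $\pm 1$'' should be understood accordingly.
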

It was shown in~\cite[Corollary 3.2]{CMPP2} that the diffeomorphism $\varphi$ can be realized as the first-return map on a disk-like transverse section of a stationary solution to the Euler equations for some metric on any compact three-manifold $M$, which yields a Turing complete stationary fluid flow~\cite[Theorem 6.1]{CMPP2}. The main application of Theorem~\ref{T.main} is that the Turing complete Euler flows constructed in~\cite{CMPP2} always have positive topological entropy whenever the simulated universal Turing machine is branching.
\begin{corollary}
If $T$ is a branching reversible Turing machine then its associated diffeomorphism of the disk $\varphi$ and the steady Euler flows on a compact three-manifold $M$ constructed in~\cite{CMPP2} have positive topological entropy. If $T$ is universal, then $\varphi$ exhibits a compact chaotic invariant set $K$ homeomorphic to the square Cantor set so that $\varphi|_K$ is Turing complete.
\end{corollary}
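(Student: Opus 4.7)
The plan is to cascade the topological conjugacies from the preceding lemma with Theorem~\ref{T.main}, and then transfer the result from the diffeomorphism $\varphi$ to the Euler flow via the Poincar\'e section construction of~\cite{CMPP2}.

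First, since $T$ is regular, Theorem~\ref{T.main} yields $h(T)>0$, and Proposition~\ref{prop:entropyGS} gives $h(\Delta)\geq h(T)>0$ for the associated generalized shift $\Delta$. The preceding lemma asserts that $\Delta$ is conjugate, via the encoding $e$ of Equation~\eqref{eq:coding}, to the restriction $\varphi|_{C^2}$, where $C^2\subset D$ is the square Cantor set. Since topological entropy is a conjugation invariant, $h(\varphi|_{C^2})=h(\Delta)>0$. Setting $K:=C^2$, which is compact and $\varphi$-invariant, monotonicity of topological entropy under restriction to invariant sets then gives $h(\varphi)\geq h(\varphi|_K)>0$.

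To obtain the corresponding statement for the Euler flow, I would invoke~\cite[Corollary 3.2]{CMPP2}, which realizes $\varphi$ as the first-return map of a stationary Euler flow $X$ on a compact Riemannian three-manifold $(M,g)$, with the section containing $D$ as a disk-like transverse piece. Since $\varphi$ equals the identity near $\partial D$ and the return time is bounded on compact subsets of the interior of the section, the classical relation between the topological entropy of a flow and any of its Poincar\'e return maps (see e.g.~\cite{KH}) yields $h(X)\geq h(\varphi)>0$.

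Finally, for the universal case, composing the conjugacies $\varphi|_K\cong \Delta\cong R_T$ (the second one using that reversibility makes $\Delta$ globally conjugate to $R_T$, by the preceding lemma) through the computable homeomorphism $e$ transfers Turing completeness of $R_T$ to $\varphi|_K$: given an input of the simulated universal machine, the corresponding computable point of $K$ enters a suitable computable open set if and only if the machine halts. Together with $h(\varphi|_K)>0$, this exhibits $K$ as the desired compact chaotic invariant set on which $\varphi$ is Turing complete. I expect the main obstacle to be the transfer from return-map entropy to flow entropy, which, while classical, requires controlling the return time uniformly on $K$ and near $\partial D$; this follows from the construction in~\cite{CMPP2} but merits explicit verification. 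All remaining steps reduce to bookkeeping over conjugacies already established.
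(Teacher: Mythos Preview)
Your argument is correct and follows essentially the same route as the paper: apply Theorem~\ref{T.main} and Proposition~\ref{prop:entropyGS} to get $h(\Delta)>0$, use the conjugacy from the preceding lemma to deduce $h(\varphi|_{C^2})>0$ and hence $h(\varphi)>0$, and then pass to the Euler flow via the first-return map realization of~\cite{CMPP2}. Your added care about the flow-entropy transfer and the explicit conjugacy chain for Turing completeness are reasonable elaborations, but the core strategy matches the paper's proof.
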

\begin{proof}
Given Theorem~\ref{thm:regular}, the Turing machine $T$ has positive topological entropy. Its associated generalized shift $\Delta$ has positive topological entropy too by Proposition~\ref{prop:entropyGS}. The identification of the space of sequences of the generalized shift with a square Cantor set $C\subset [0,1]^2$ via the homeomorphism~\eqref{eq:coding} implies that $\Delta$ induces a map $\tilde \Delta: C\rightarrow C$. By~\cite[Proposition 5.1]{CMPP2} there exists an area-preserving diffeomorphism $\varphi:D\rightarrow D$ of a disk $D$ strictly containing the unit square whose restriction to $C$, which is an invariant set, coincides with $\tilde \Delta$. Hence, the topological entropy of $\varphi$ is necessarily positive. The compact chaotic invariant set is $K\equiv C$, and $\varphi|_K$ is Turing complete whenever $T$ is universal. The stationary Euler flow in~\cite[Theorem 6.1]{CMPP2} admits a transverse disk where the first-return map is conjugate to $\varphi$, and therefore it has positive topological entropy too.
\end{proof}

\section{A universal Turing machine with zero topological entropy}\label{appendix}

\newcommand{\N}{\mathbb{N}}

As we have shown by examples, most ``natural'' universal Turing machines in the literature are branching. We now show that it is nevertheless possible to construct a universal Turing machine with zero topological entropy (thus, in particular, not branching), although it does not seem likely that such an object would arise from a natural example.
\subsection{The example: proof of Theorem~\ref{T.main2}}
The construction is based directly on the universal counter machines of Minsky \cite{M}. After this construction, we explain how to obtain another proof (of a stronger result) from a more involved construction of Hooper \cite{H} (or Kari and Ollinger \cite{KM}). 

\begin{theorem}
There is a universal Turing machine with zero topological entropy.
\end{theorem}

\begin{proof}
The idea of the construction is to exponentially slow down the computation of an arbitrary Turing machine $T$, so that universality is retained, but the resulting machine $U$ has zero entropy (even if $T$ does not). We do this by stacking two standard simulations: Minsky's simulation of a Turing machine by a counter machine, and the simulation of a counter machine by a Turing machine.

We recall the definition of a $2$-counter machine. This is $C = (Q, q_0, q_{\text{halt}}, \delta)$ where $\delta : (Q \setminus \{q_{\text{halt}}\}) \times \{0, +\}^2 \to Q \times \{-1, 0, +1\}^2$ is the transition relation. The machine defines a partial transition function on $Q \times \N \times \N$ (defined if and only if the state is not $q_{\text{halt}}$). The interpretation of $(q, m, n)$ is that the machine is in state $q$, and the current counter values are $m, n$.

The interpretation of $\delta(q, a_1, a_2) = (q', b_1, b_2)$ is that if the current state is $q$ and $a_i = 0$ iff the $i$th counter has zero value, then we step into state $q'$ and add $(b_1, b_2)$ to the counter values (we require that $a_i = 0 \implies b_i \neq -1$). Write $t \Rightarrow_C t'$ if there is a one-step computation of $C$ from $t \in Q \times \N \times \N$ to $t' \in Q \times \N \times \N$, and as in the case of Turing machines define a partial function $\Phi_C(t) = t'$ if $t \Rightarrow_C^* t'$ and $t'$ is halting (i.e.,\ $t' = (q_{\text{halt}}, m, n)$ for some $m, n$).

It is a result of Minsky \cite{M} that a 2-counter machine can simulate an arbitrary Turing machine, in the sense that for any Turing machine $T$ with states $Q$, we can find a 2-counter machine with states $Q' \supset Q$ and the same halting state $q_{\mathrm{halt}}$, and total computable functions $e : X_T^c \to Q \times \N$ and ${f} : Q \times \N \to X_T^c$ such that if started from state $(e(t), 0)$ with $t \in X_T^c$, the next time we are in a state $(q', m', 0) \in Q \times \N \times \N$ (i.e., the next time the second counter contains $0$, and the state is in the subset $Q$ of $Q'$, we have ${f}(q', m') = t'$ such that $t \Rightarrow_T t'$). 

Next, for any counter machine $C$ with states $Q$ and transitions $\delta$, it is again possible to construct a Turing machine $T$ with alphabet $\{@, 0, 1\}$ and states $Q' = Q \sqcup Q''$ which simulates the counter machine in the following sense: If started on the configuration
\[ {^\omega} 0 1 0^m @ 0^n 1 0^\omega \]
with the head on the $@$-symbol in state $q \in Q$, then when in the sequence of $\Rightarrow_T$-steps we next enter a configuration of the form
\[ {^\omega} 0 1 0^{m'} @ 0^{n'} 1 0^\omega \]
with the head on the $@$-symbol in some state $q' \in Q$, we have $(q, m, n) \Rightarrow_C (q', m', n')$. We call this the \emph{simulation property}.

The way this is done is simply that the Turing machine performs a back-and-forth sweep both ways to check which of the counters have zero value, and then performs new sweeps in order to update them, according to the transition function of $C$. 

If we start with any universal Turing machine $T$, then simulate it with a 2-counter machine $C$ in the sense of Minsky, and then again simulate it with another Turing machine $U$ with the simulation property, then $U$ is also universal: given an input $x \in \mathcal{X}$ and the number $n$ describing the Turing machine to simulate, we first use the universality of $T$ to find an input to give to $T$. This is then converted to an input for $C$, and then to one for $U$. As for the decoding function $d$, if $U$ halts, from the simulation properties we can read off how the simulated machine $C$ halted, and from that how the simulated machine $T$ halted, and lastly from this we recover how $T_n$ halts on input $x$.

We now describe a naive concrete implementation of such a machine $U$ (simulating any counter machine, such as one simulating a universal Turing machine).

One can use a state set of the form $Q \sqcup (Q \times \{0, +\}^2 \times R) \sqcup \{\bot\}$, where $Q$ simulates the states of the counter machine, $\bot$ is a fail state, and the elements of $Q \times \{0, +\}^2 \times D$ are interpreted as follows: $Q$ remembers the counter machine state, $\{0, +\}^2$ is a finite amount of memory for storing values of counters, and $R$ is used to remember what we are doing. 

Specifically, we can pick
\begin{align*}R = \{&\text{left check}, \text{left check return}, \text{right check}, \text{right check return}, \text{left update}, \\ &\text{left drop}, \text{left update return}, \text{right update}, \text{right drop}, \text{right update return}\}. \end{align*}

Initially, when initialized in a state simulating a state of the counter machine, our machine does not know what the counter values are, and should perform two sweeps in order to calculate them. This can be done as follows: 
\begin{align*}
\delta'(q, @) &= ((q, 0, 0, \text{left check}), @, -1) \\
\delta'((q, a, 0, \text{left check}), 0) &= ((q, +, 0, \text{left check}), 0, -1) \\
\delta'((q, a, 0, \text{left check}), 1) &= ((q, a, 0, \text{left check return}), 1, 1) \\
\delta'((q, a, 0, \text{left check return}), 0) &= ((q, a, 0, \text{left check return}), 0, 1) \\
\delta'((q, a, 0, \text{left check return}), @) &= ((q, a, 0, \text{right check}), @, 1) \\
\delta'((q, a, b, \text{right check}), 0) &= ((q, a, +, \text{right check}), 0, 1) \\
\delta'((q, a, b, \text{right check}), 1) &= ((q, a, b, \text{right check return}), 1, -1) \\
\delta'((q, a, b, \text{right check return}), 0) &= ((q, a, b, \text{right check return}), 0, -1) \\
\delta'((q, a, b, \text{right check return}), @) &= ((q, a, b, \text{left update}), @, -1)
\end{align*}
At this point, the state is expected to be $(q,a,b,\text{left update})$ where $q$ is the simulated counter machine state, and $a, b \in \{0, +\}$ are the information about whether the counters are zero or positive. Next, we should update the counters. Note that decrementing the left counter means simply changing the $1$ on the left to $0$, and rewriting it on the right (by using the drop state), and similarly for other counter updates; and symmetrically for the right counter.

Specifically, assume the counter machine transitions as $\delta(q, a, b) = (q', c, d)$. Then the added transitions can be taken to be
\begin{align*}
\delta'((q, a, b, \text{left update}), 0) &= ((q, a, b, \text{left update}), 0, -1) \\
\delta'((q, a, b, \text{left update}), 1) &= ((q, a, b, \text{left drop}), 0, -c) \\
\delta'((q, a, b, \text{left drop}), 0) &= ((q, a, b, \text{left update return}), 1, 1) \\
\delta'((q, a, b, \text{left update return}), 0) &= ((q, a, b, \text{left update return}), 0, 1) \\
\delta'((q, a, b, \text{left update return}), @) &= ((q, a, b, \text{right update}), @, 1) \\
\delta'((q, a, b, \text{right update}), 0) &= ((q, a, b, \text{right update}), 0, 1) \\
\delta'((q, a, b, \text{right update}), 1) &= ((q, a, b, \text{right drop}), 0, d) \\
\delta'((q, a, b, \text{right drop}), 0) &= ((q, a, b, \text{right update return}), 1, -1) \\
\delta'((q, a, b, \text{right update return}), 0) &= ((q, a, b, \text{right update return}), 0, -1) \\
\delta'((q, a, b, \text{right update return}), @) &= (q', @, 0)
\end{align*}
It is clear that no matter what the other transitions are, this realizes the counter machine simulation correctly: one can exactly calculate the sequence of moves performed on the configuration ${^\omega} 0 1 0^m @ 0^n 1 0^\omega$, and no unexpected situations can arise (note that by assumption, our counter machines never try to decrement a counter with value zero).

Now we let all other transitions enter the state $\bot$, and in this state, loop forever without moving. We claim that then the machine has zero topological entropy. For this, we analyze a computation of the machine on an arbitrary configuration, for $N$ steps. 

Observe that when we defined $R$ above, we listed it in a particular order. Our machine has the property that when it is in a state outside $Q \cup \{\bot\}$, the $R$-component of the state will evolve in this order, until the machine has either entered $\bot$ (and is in an infinite loop without moving), or is back to a state of $Q$, necessarily on top of the symbol $@$. From a quick look at the transitions, we see that it moves to the next state of $R$ whenever it sees any nonzero symbol on the tape. Note that this implies that this initial segment of the computation can be described by at most four numbers and some constant information, thus has a description with $\log O(N^4) = O(\log N)$ bits.

Once we are in state $Q$, the computation in fact simulates the counter machine exactly as above, or enters the state $\bot$: The machine will look for the next $1$ to the left of $@$, then for the next $1$ to the right, and then update their positions. If it never finds such $1$, or runs into another $@$-symbol, it enters state $\bot$; or if it tries to move $1$ further away from $@$, then that position must contain $0$ or it enters state $\bot$.

We use some tools from symbolic dynamics to compute the entropy (a direct calculation would also be straightforward, but we prefer this proof as it better elucidates why the long traversals automatically imply zero entropy). First recall Oprocha's formula~\eqref{eq.opr} stating the entropy in terms of the words $S(N, U)$ \cite{Op}. We recall the symbolic dynamical interpretation of this: Define the set $Z \subset (Q\times \Sigma)^\omega$ of all infinite words whose finite $n$-prefix is in $S(n, U)$ for all $n$. Then $Z$ is a subshift, meaning it is topologically closed and closed under the left shift $\sigma(z)_i = z_{i+1}$. Oprocha's formula states that the entropy of $U$ is equal to the entropy of $Z$, and thus it suffices to show that $Z$ has zero entropy.

It is well-known that positive entropy for a subshift $Z$ implies that some infinite configuration $z \in Z$ has linear Kolmogorov complexity in all prefixes \cite{S,J}, meaning the shortest program that generates the prefix of length $n$ of $z$ from empty input has length $\Theta(n)$. Thus if we had positive entropy for the Turing machine, then some words in $S(N, U)$ would require at least $C N$ bits to describe for all large enough $N$. Suppose this is the case, and we show a contradiction by compressing them strictly more efficiently.

By the explanation above, any computation can be compressed by remembering $O(\log N)$ bits; then the part where we simulate the counter machine; then another $O(\log N)$ bit compressible suffix describing a computation that does not cycle through all of $R$; and finally possibly we remember a number indicating how much time we spend in state $\bot$, again requiring at most $O(\log N)$ bits.

We now analyze the part where we simulate the counter machine, as it is the only possible source of a linear amount of Kolmogorov complexity. Let $(q_1, m_1, n_1), (q_2, m_2, n_2), \ldots$ be the sequence of simulated states and counter values encountered during this simulation part. Note that we must have $N \geq \sum_i (m_i + n_i)$, as the machine certainly spends more than $m_i + n_i$ steps to read counter values $m_i$ and $n_i$ encoded in the distances of $1$s from the $@$-symbol, and to update them (recall that we always read these values whether or not the machine actually ``needs'' to know their values).

We can compress the information about this sequence into $B \sum_i \log(m_i + n_i)$ bits for some constant $B$, by simply writing down the numbers in binary (more naturally we get $\log(m_i) + \log(n_i)$, but $\log(m_i) + \log(n_i) \leq 2\log(m_i + n_i)$ and the $2$ disappears into $B$). Let $I$ be the set of $i$ such that $\log (m_i + n_i) < \frac{C}{2B}(m_i + n_i)$ Note that this is true whenever $m_i + n_i > D$ for some constant $D$. Let $J$ be the complement of $I$ (among indices of the $(q_i, m_i, n_i)$).

If we do not have a repetition among the $(q_i, m_i, n_i)$, then have the (rough) upper bound
\begin{align*}
B \sum_i \log(m_i + n_i) &\leq C\sum_{i \in I} (m_i + n_i)/2 + B \sum_{i \in J} \log D \\
&\leq C N/2 + B |Q| D^2 \log D
\end{align*}
where $B, |Q|, D$ do not depend on $N$, so this is far smaller than $CN$ for large $N$, even together with the initial and final parts of the computation that took $O(\log N)$ bits to compress.

On the other hand, computations with repeated $(q_i, m_i, n_i)$ are periodic, and even easier to compress.

This contradiction proves that there cannot be a linear lower bound on the compressibility, which finally concludes the proof of zero entropy.
\end{proof}

\subsection{The speed of the Turing machine}

A Turing machine admits a notion of \emph{speed}, namely one calculates the maximal offset by which the head can move in $n$ steps. We observe that this quantity is subadditive and takes a normalized limit using Fekete's lemma. It is easy to show that zero speed implies zero entropy.

In 1969, Hooper proved~\cite{H} (see \cite{KM} for a reversible version with an arguably easier proof) that given a Turing machine, it is undecidable whether it admits configurations where the machine never halts. If a Turing machine halts on every configuration, then a simple compactness argument shows that there is a bound on the number of steps it takes to halt. Thus, the undecidability must come from computations that do not halt. 

Thus, Hooper at least had to show that one can perform universal computation 
with a Turing machine such that there are no situations where it is easy to prove that the machine never halts (on infinite configurations). One such situation is an infinite ``search'' for a symbol. In all direct simulations (and definitely in the counter machine simulation we performed above), there are such infinite searches, and due to compactness of the configuration space, it is tempting to think that they are necessary. They are not, and the genius trick of Hooper was to show that one can trick compactness by starting computations recursively, so that even though there are infinite searches, there are other searches between them. This is analogous to Berger's proof in \cite{B} of the undecidability of the domino problem.

It was later clarified by Jeandel that Hooper was in a sense literally fighting positive speed: \cite{J} shows that if a Turing machine has positive speed (resp.\ positive entropy), then this can be proved in ZF.

Thus, our conclusion is that at least infinitely many of Hooper's machines must have zero speed, thus zero entropy. Since they involve an undecidability problem, one should expect them to involve universal computation, and indeed Hooper's machines have literally the simulation property we described above (except the encoding is somewhat different, and there are many intermediate configurations where multiple @-symbols appear, due to the recursive computations started at all times).

Unfortunately, Hooper was not explicitly concerned with universal Turing machines, nor explicitly discusses speed or entropy, and thus we did not find it easy to use his results as a black box to prove even the existence of a zero entropy universal Turing machine. Nevertheless, there is no doubt that his construction implies that zero speed universal Turing machines exist, and as we have tried to argue here this is morally an automatic consequence of his result.

We state the stronger reversible statement: A reversible variant of Hooper's construction is given in \cite{KM}. This is also a direct simulation of a reversible counter machine, and such a machine can simulate an arbitrary (not necessarily reversible) Turing machine up to a computable encoding. From the construction, one thus obtains the following result (stated as Theorem \ref{T.main2} in the Introduction).

\begin{theorem}
There exists a universal Turing machine that is reversible and has zero speed. In particular, it has zero topological entropy.
\end{theorem}

\section*{Acknowledgements}
The authors are very grateful to Leonid Polterovich for suggesting that we study the topological entropy of Turing-complete dynamical systems, and to the reviewers of this article for their suggestions to improve the manuscript.

\end{document}